\newcounter{theorem}
\newtheorem{theorem}[theorem]{Theorem}
\newtheorem{lemma}[theorem]{Lemma}
\newtheorem{proposition}[theorem]{Proposition}
\theoremstyle{definition}
\newtheorem{definition}[theorem]{Definition}
\newtheorem{question}[theorem]{Question}
\theoremstyle{remark}
\newtheorem*{remark*}{Remark}
\newtheorem{remark}[theorem]{Remark}
\numberwithin{equation}{section}
\newcommand{\Z}{\mathcal Z}
\newcommand{\K}{\mathbb{K}}
\newcommand{\N}{\mathbb{N}}
\newcommand{\M}{\mathbb{M}}
\newcommand{\tr}{\mathrm{tr}}
\newcommand{\Tr}{\mathrm{Tr}}
\title{Stabilising uniform property $\Gamma$}
\author[J.\ Castillejos]{Jorge Castillejos}
\address{\hskip-\parindent Jorge Castillejos, Institute of Mathematics, Polish Academy of Sciences, ul. {\'S}niadeckich 8, 00-656 Warszawa, Poland}
\email{jcastillejoslopez@impan.pl}
\author[S.\ Evington]{Samuel Evington}
\address{\hskip-\parindent Samuel Evington, Mathematical Institute, University of Oxford, Oxford, OX2 6GG, UK.}
\curraddr{Mathematical Institute, University of M{\"u}nster, Einsteinstrasse 62, 48149 M{\"u}nster, Germany}
\email{evington@uni-muenster.de}
\thanks{SE was supported by EPSRC grant EP/R025061/2. JC was partially supported by long term structural funding -- a Methusalem grant of the Flemish Government}
\begin{document}

\maketitle

\begin{abstract}
We introduce stabilised property $\Gamma$, a $\mathrm{C}^*$-algebraic variant of property $\Gamma$ which is invariant under stable isomorphism. We then show that simple separable nuclear $\mathrm{C}^*$-algebras with stabilised property $\Gamma$ and $\mathrm{Cu}(A) \cong \mathrm{Cu}(A \otimes \Z)$ absorb the Jiang-Su algebra $\Z$ tensorially.
\end{abstract}

\renewcommand*{\thetheorem}{\Alph{theorem}}

\section*{Introduction}

A II$_1$ factor has property $\Gamma$ if there exist approximately central unitaries of trace
zero (\cite{vN43}). This notion was introduced by Murray and von Neumann as a tool to show the existence of non-hyperfinite $\mathrm{II}_1$ factors. (Indeed, the hyperfinite II$_1$ factor $\mathcal{R}$ has property $\Gamma$, whereas the free group factors $L(\mathbb{F}_n)$ do not.) Subsequently, Dixmier showed that property $\Gamma$ could also be characterised by the existence of non-trivial approximately central projections (\cite{Di69}).

\emph{Uniform property $\Gamma$} was recently introduced by the authors together with Tikuisis, White and Winter in order to obtain structural results for simple, separable, nuclear, unital $\mathrm{C}^*$-algebras (\cite{CETWW}). More precisely, it was shown that two regularity properties \emph{finite nuclear dimension} and \emph{$\Z$-stability} coincide as predicted by the Toms--Winter conjecture (\cite{TW07}). Further study of uniform property $\Gamma$, has shown that the full Toms--Winter conjecture holds for unital C$^*$-algebras with uniform property $\Gamma$ (\cite{CETW}). We refer to \cite{Win18} for a thorough description of the Toms--Winter conjecture and its importance in the classification programme of nuclear $\mathrm{C}^*$-algebras.

In this paper, we develop a variant of uniform property $\Gamma$ more suited to the analysis of non-unital C$^*$-algebras. We prove that it is invariant under stable isomorphism and coincides with uniform property $\Gamma$ for simple unital C$^*$-algebras. For clarity of presentation, we call this new property \emph{stabilised property $\Gamma$}. The key technical difference is that stabilised property $\Gamma$ involves all lower semicontinuous traces, not just the continuous ones. 

By Brown's theorem (\cite{Br77}), any non-zero hereditary $\mathrm{C}^*$-subalgebra of a simple separable $\mathrm{C}^*$-algebra will have stabilised property $\Gamma$ if the original $\mathrm{C}^*$-algebra has it. Using this, we obtain the following theorem that corresponds to the direction (iii) $\Rightarrow$ (ii) of the Toms--Winter conjecture in the non-unital setting. Previous results on this direction in the non-unital setting were obtained in \cite{Na13,Jac13}.

\begin{theorem}\label{thm:Main}
	Let $A$ be a simple, separable, nuclear $\mathrm{C}^*$-algebra such that $\mathrm{Cu}(A) \cong \mathrm{Cu}(A \otimes \Z)$. If $A$ has stabilised property $\Gamma$ then $A$ is $\Z$-stable.
\end{theorem}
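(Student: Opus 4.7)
The strategy is to adapt the argument of \cite{CETWW} --- which proves the analogous implication for simple, separable, nuclear, \emph{unital} $\mathrm{C}^*$-algebras with uniform property $\Gamma$ and strict comparison --- to the non-unital setting. Brown's theorem and the passage of stabilised property $\Gamma$ to hereditary subalgebras are the two key tools.

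Since $\Z$-stability is preserved under stable isomorphism, I would first reduce to showing that $A \otimes \mathcal{K}$, or equivalently any non-zero hereditary $\mathrm{C}^*$-subalgebra $B \subseteq A \otimes \mathcal{K}$, is $\Z$-stable. Such a $B$ inherits simplicity, separability, nuclearity, stabilised property $\Gamma$ (by the discussion preceding the theorem), and the identification $\mathrm{Cu}(B) \cong \mathrm{Cu}(B \otimes \Z)$ (since the Cuntz semigroup is invariant under stable isomorphism).

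When $A \otimes \mathcal{K}$ contains a non-zero projection $p$, I would take $B = p(A \otimes \mathcal{K})p$ to be unital: stabilised property $\Gamma$ then coincides with uniform property $\Gamma$ (as noted in the introduction), the Cu-condition implies strict comparison for $B$, and \cite{CETWW} applies directly to give $\Z$-stability of $B$, hence of $A$. In the stably projectionless case (for instance, the Razak--Jacelon algebra $\mathcal{W}$), one must instead run a non-unital variant of the CETWW argument directly inside Kirchberg's generalised central sequence algebra $F(B)$, which is unital even when $B$ is not. The equivalence between $\Z$-stability of $B$ and a unital embedding $\Z \to F(B)$, combined with the Matui--Sato framework, reduces the task to producing order-zero maps $M_n \to F(B)$ with prescribed sub-equivalence of support and defect. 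These should be built from two ingredients: (a) an approximately central trace-equipartitioning element controlled by \emph{all} lower-semicontinuous $2$-quasitraces, supplied by stabilised property $\Gamma$; and (b) trace-strict comparison of positive elements against the same family of traces, supplied by the Cu-condition.

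The principal obstacle is that the cone of lower-semicontinuous traces is no longer compact and its elements need not be bounded on approximate units, so the tracial arguments of \cite{CETWW} --- which rely essentially on compactness of the trace simplex --- do not transcribe verbatim. Stabilised property $\Gamma$ is formulated precisely to give sufficient uniform control across this cone; weaving the approximately central projections it produces into a single embedding $\Z \to F(B)$ simultaneously compatible with an uncountable family of possibly unbounded traces is, I expect, the central technical hurdle.
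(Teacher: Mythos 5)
Your overall architecture---reduce via Brown's theorem and stable invariance of stabilised property $\Gamma$ to a well-chosen hereditary subalgebra $B \subseteq A \otimes \K$, handling a unital corner by the known unital result---matches the paper's. But in the stably projectionless case you propose to rerun the CETWW argument inside the central sequence algebra against the full cone of possibly unbounded lower semicontinuous traces, and you yourself flag the resulting uniformity problem as the central technical hurdle without resolving it. That hurdle is never confronted in the actual proof; the missing idea is precisely how the hypothesis $\mathrm{Cu}(A) \cong \mathrm{Cu}(A \otimes \Z)$ is used, namely not merely to obtain strict comparison but to \emph{select} the hereditary subalgebra. One takes a strictly positive continuous linear function $f$ on $T^+(A)$, uses the $\mathrm{Cu}$-isomorphism together with \cite[Theorem 6.6]{ERS11} to realise $f$ as a rank function $\tau \mapsto d_\tau([a])$ for some $a \in (A \otimes \K)_+$, and sets $B = \overline{a(A\otimes\K)a}$ (Lemma \ref{lem:nice.her.subalgebra}). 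By \cite[Lemma 2.5]{CE} this $B$ satisfies $T^+(B) = T_b(B)$ with $T(B)$ compact and non-empty, so stabilised property $\Gamma$ for $B$ collapses to uniform property $\Gamma$ (Proposition \ref{prop:sGamma.implies.uGamma}), $B$ is uniformly McDuff by \cite[Theorem 4.6]{CETW}, and $\Z$-stability follows from the non-unital Matui--Sato-type theorem of Jacelon (Theorem \ref{thm:SI})---all with bounded traces and a compact trace simplex, so no new unbounded-trace analysis is required.

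A further omission: you do not treat the traceless case $T^+(A) = \{0\}$, where no hereditary subalgebra carries any trace and the tracial machinery says nothing. There the $\mathrm{Cu}$-hypothesis gives almost unperforation, hence pure infiniteness by R{\o}rdam, and $A \cong A \otimes \mathcal{O}_\infty$ by Kirchberg--Phillips, which yields $\Z$-stability directly. The correct case division is therefore traceless versus traced; within the traced case the distinction between having a projection and being stably projectionless affects only how one produces $B$, and in both subcases one lands in the bounded-trace, compact-$T(B)$ setting rather than the unbounded-trace setting you anticipate.
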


The Cuntz semigroup hypothesis $\mathrm{Cu}(A) \cong \mathrm{Cu}(A \otimes \Z)$ is used to construct a hereditary $\mathrm{C}^*$-subalgebra of $A \otimes \K$ with favourable tracial properties and to ensure strict comparison of positive elements. (See Section \ref{subsection:Cuntz.equiv} for the relevant definitions and background material on the Cuntz semigroup.) In light of the recent developments in the theory of the Cuntz semigroup (\cite{Th20, APRT}), the condition $\mathrm{Cu}(A) \cong \mathrm{Cu}(A \otimes \Z)$ can be replaced with strict comparison in the stable rank one setting. 

Finally, by combining the work of many authors on the Toms--Winter conjecture (\cite{Ro04, Wi12, Ti14, MS12, MS14, SWW15, BBSTWW, CETWW, CE, CETW}), we obtain that the following form of the Toms--Winter conjecture, where strict comparison is replaced with $\mathrm{Cu}(A) \cong \mathrm{Cu}(A \otimes \Z)$, holds under the extra condition of stabilised property $\Gamma$.

\begin{theorem}\label{thm:Toms-Winter}
	Let $A$ be simple, separable, nuclear and non-elementary $\mathrm{C}^*$-algebra.
	The following statements are equivalent:
	\begin{enumerate}[(i)]
		\item $A$ has finite nuclear dimension,
		\item $A$ is $\Z$-stable,
		\item $A$ has stabilised property $\Gamma$ and $\mathrm{Cu}(A) \cong \mathrm{Cu}(A \otimes \Z)$.
	\end{enumerate}
\end{theorem}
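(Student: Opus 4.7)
The plan is to close a cycle of implications by combining Theorem~\ref{thm:Main} with known results on the Toms--Winter conjecture and a direct verification that $\Z$-stability forces stabilised property $\Gamma$. It is convenient to treat the classical equivalence (i)~$\Leftrightarrow$~(ii) separately from the new equivalence (ii)~$\Leftrightarrow$~(iii), and to observe that (iii)~$\Rightarrow$~(ii) is nothing other than Theorem~\ref{thm:Main}.

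For (i)~$\Leftrightarrow$~(ii) both directions are already in the literature. The implication (i)~$\Rightarrow$~(ii), that finite nuclear dimension yields $\Z$-stability, was proved by Winter~\cite{Wi12} in the unital setting and extended to the non-unital, non-elementary case by Tikuisis~\cite{Ti14}. Conversely, $\Z$-stability implies finite nuclear dimension (indeed $\dimnuc(A)\le 1$) via \cite{CETWW} in the unital case, with the non-unital extension supplied by \cite{CE, CETW}; the non-elementary hypothesis is precisely what allows the matricial structure exploited in those proofs.

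For the remaining equivalence, only (ii)~$\Rightarrow$~(iii) needs attention. The Cuntz semigroup condition is immediate from $A \cong A \otimes \Z$. For stabilised property $\Gamma$, the plan is to exploit the tensor decomposition $A \cong A \otimes \Z$ together with the existence inside $\Z$ of sequences of order-zero maps from matrix algebras that are approximately central and have uniformly controlled behaviour with respect to the unique trace on $\Z$. Tensoring with $A$ produces approximately central order-zero maps in $A\otimes\Z$, and any lower semicontinuous 2-quasitrace on $A\otimes\Z$ factors through the unique trace of $\Z$ on such tensors, so the tracial condition demanded by stabilised property $\Gamma$ is verified on these maps.

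The main subtlety, and the only step that is not a direct citation, is the passage from bounded continuous traces (as in the uniform property $\Gamma$ of \cite{CETWW}) to arbitrary lower semicontinuous traces that the new definition requires. I expect this to be handled by cutting down by positive elements on which the traces in question are finite and using the density of such hereditary subalgebras in $A$, reducing the infinite-tracial statement to a family of finite-tracial ones to which the methods of \cite{CETWW, CETW} apply. Beyond this verification the argument is a matter of stitching together the references above.
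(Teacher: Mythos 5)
Your overall architecture matches the paper's: (i)~$\Leftrightarrow$~(ii) is quoted from \cite{Wi12, Ti14, CETWW, CE}, and (iii)~$\Rightarrow$~(ii) is exactly Theorem~\ref{thm:Main}. Where you genuinely diverge is in (ii)~$\Rightarrow$~(iii). You propose to verify stabilised property $\Gamma$ directly in $A \otimes \Z$ by tensoring approximately central order-zero maps from $\Z$ into the algebra and checking the tracial condition against all generalised limit traces by hand. The paper instead never touches unbounded traces in this direction: it invokes \cite[Theorem 2.7]{CE} to replace $A$ by a stably isomorphic hereditary subalgebra $B \subseteq A \otimes \K$ with $T^+(B) = T_b(B)$ and $T(B)$ compact and non-empty, observes that $B$ has uniform property $\Gamma$ by \cite[Proposition 2.3]{CETWW}, upgrades this to stabilised property $\Gamma$ via Proposition~\ref{prop:sGamma.implies.uGamma}(ii) (which rests on Proposition~\ref{prop:LimitTracesSuffice}: under $T^+(B)=T_b(B)$ and compactness of $T(B)$, every non-trivial generalised limit trace is a multiple of a limit trace), and then transports the conclusion back to $A$ using the stable invariance established in Theorem~\ref{thm:upGamma.stable}. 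The point of this detour is precisely to make the ``main subtlety'' you identify evaporate: on $B$ there are no genuinely unbounded traces to worry about. (The paper also disposes of the traceless case separately by taking all $e_i = 0$.)

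The step you flag as the only non-citation --- passing from bounded traces to arbitrary lower semicontinuous ones by ``cutting down by positive elements on which the traces in question are finite'' --- is the real content of the implication, and as written it is a plan rather than a proof. Making it work requires showing, for a generalised limit trace $\sigma$ induced by a sequence $(\tau_k \otimes \mathrm{tr}_{\Z})$ of possibly unbounded traces and for $a \in (A \otimes \Z)_+$ with $\sigma(a) < \infty$, that $\sigma\bigl(a (1 \otimes h_k)\bigr) \to \tfrac{1}{n}\sigma(a)$ for your approximately central contractions $h_k$; this involves the lower semicontinuous regularisation in the definition of generalised limit traces and an approximation of $a$ by elements on which each $\tau_k \otimes \mathrm{tr}_{\Z}$ is finite. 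It is of the same order of difficulty as the paper's Proposition on the stabilisation isomorphism and traces (the computation behind Theorem~\ref{thm:upGamma.stable}), so your route is viable but not cheaper. If you want to keep your direct construction, the efficient fix is to run it only on the algebra $B$ supplied by \cite[Theorem 2.7]{CE} (or Lemma~\ref{lem:nice.her.subalgebra}), where Proposition~\ref{prop:LimitTracesSuffice} reduces everything to limit traces, and then cite stable invariance --- which is exactly what the paper does.
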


\subsection*{Structure of the paper.} In Section 1, we review the preliminary material needed for this note. In Section 2, we define the notion of stabilised property $\Gamma$ and prove that it is invariant under stable isomorphism.
Finally, Theorems \ref{thm:Main} and \ref{thm:Toms-Winter} are proved in Section 3.  

\subsection*{Acknowledgements} Portions of this research were undertaken during the
workshop ``Topology and Measure in Dynamics and Operator Algebras'' at the Banff International Research Station. We thank the organisers and funders of this programme.
The authors would also like to thank Jamie Gabe, G\'abor Szab\'o, Hannes Thiel and Stuart White for useful comments on the topic of this paper. JC would like to thank Adam Skalski for his support during the preparation of this document.

\numberwithin{theorem}{section}

\section{Preliminaries}

\subsection{Notation}
We write $\K$ for the $\mathrm{C}^*$-algebra of compact operators on the separable Hilbert space $\ell^2$. The standard matrix units are denoted $e_{ij}$ for $i,j \in \N$. We identify the matrix algebra $\M_n$ with the subalgebra of $\K$ generated by $e_{ij}$ for $1 \leq i,j \leq n$. We write $1_n$ for the unit of $\M_n$, so $(1_n)_{n \in \N}$ is an approximate unit for $\K$.

\subsection{Traces}\label{subsection:traces} 
Let $A$ be a $\mathrm{C}^*$-algebra. An (extended) trace is a map $\tau:A_+ \rightarrow [0,\infty]$ such that $\tau(0) = 0$, $\tau(\lambda_1 a_1 + \lambda_2 a_2) = \lambda_1 \tau(a_1) + \lambda_2 \tau(a_2)$ for $\lambda_i > 0$ and $a_i \in A_+$, and $\tau(a^*a) = \tau(aa^*)$ for all $a \in A$. The set $\mathrm{Dom}(\tau) = \mathrm{span}\{a\in A_+:\tau(a) < \infty\}$ is an ideal of $A$, and $\tau$ induces a linear map $\mathrm{Dom}(\tau) \rightarrow \mathbb{C}$. If $\mathrm{Dom}(\tau)$ is dense in $A$, then $\tau$ is said to be \emph{densely defined}. We write $T^+(A)$ for the cone of densely-defined, lower semicontinuous traces on $A$. We endow it with the topology of pointwise convergence on the Pedersen ideal of $A$.

It is well-known that a densely-defined, \emph{continuous} trace $\tau$ on $A$ can be uniquely extended to a (bounded) positive linear functional on $A$ satisfying the trace identity $\tau(a_1a_2) = \tau(a_2a_1)$. We refer to such traces as \emph{bounded traces} and denote the cone of bounded traces by $T_b(A)$. We write $T(A)$ for the convex set of tracial states, i.e.\ the bounded traces of norm 1.

We write $\Tr$ for the canonical trace on $\K$. Every lower semicontinuous trace $\tau$ on $A$ extends uniquely to a lower semicontinuous trace $\tau \otimes \Tr$ on $A \otimes \K$, where $(\tau \otimes \Tr) (\sum_{ij} a_{ij} \otimes e_{ij}) = \sum_i\tau(a_{ii})$ for all $\sum_{ij} a_{ij} \otimes e_{ij} \in (A \otimes \K)_+$; see for example \cite[Remark 2.27 (viii)]{BK04}.  Moreover, by uniqueness, every lower semicontinuous trace $\rho$ on $A \otimes \K$ is of the form $\tau \otimes \Tr$ with $\tau(a) = \rho(a \otimes e_{11})$.

\subsection{Ultraproducts and limit traces}  
We fix a free ultrafilter $\omega$ on $\N$ that will be used throughout the paper for all ultraproduct constructions. 

Given a C$^*$-algebra $A$, we write $\ell^\infty(A)$ for the C$^*$-algebra of bounded sequences in $A$.
The ultrapower of a $\mathrm{C}^*$-algebra $A$ is then given by 
\begin{equation}
A_\omega := \ell^\infty(A) / \{(a_n) \mid \lim_{n \to \omega} \|a_n\| = 0 \}.
\end{equation}
We identify $A$ with the subalgebra of $A_\omega$ arising from constant sequences. We will adopt a standard abuse of notation and denote elements in $A_\omega$ by a choice of a representative sequence $(a_n)$.

A tracial state $\tau$ on $A_\omega$ is called a \emph{limit trace} if there is a sequence $(\tau_n)$ of tracial states on $A$ such that $\tau((a_n))=\lim_{n \to \omega} \tau_n(a_n)$ for all $(a_n) \in A_\omega$. The set of limit traces on $A_\omega$ will be denoted $T_\omega(A)$.

Suppose $T(A)$ is non-empty, the \emph{trace kernel ideal} is given by
\begin{equation}
	J_A = \{x \in A_\omega \mid \tau(x^*x ) = 0 \text{ for all } \tau \in T_\omega(A) \}.
\end{equation}
The \emph{uniform tracial ultrapower} of $A$ is defined as
\begin{equation}
A^\omega = A_\omega / J_A.
\end{equation}
When $A$ is separable, $A^\omega$ is unital if and only if $T(A)$ is compact by \cite[Proposition 1.11]{CETWW}. The notation $T_\omega(A)$ will also be used for tracial states on $A^\omega$ coming from limit traces.
There is a canonical map $\iota: A \to A^\omega$ given by taking constant sequences. 
This need not be an embedding in general, but it will be whenever $T(A)$ is separating. 
Abusing notation slightly, we will simply write $A^\omega \cap A'$ instead of $A^\omega \cap \iota(A)'$. 

The following result was established by Kirchberg and R{\o}rdam in \cite{KR14}, building on an observation of Sato \cite{Sa11}.  The version stated here is a combination of \cite[Proposition 4.5(iii) and Proposition 4.6]{KR14}.\footnote{Note that the proof of \cite[Proposition 4.6]{KR14} also works in the non-unital case, provided one interprets $1$ as the unit of the minimal unitisation.}

\begin{proposition}[Central Surjectivity]\label{CentSurject}
Let $A$ be a separable $\mathrm{C}^*$-algebra with $T(A)$ compact and non-empty. Then the canonical map $A_\omega\cap A'\rightarrow A^\omega\cap A'$ is a surjection.
\end{proposition}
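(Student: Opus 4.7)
The plan is to lift $y \in A^\omega \cap A'$ to an arbitrary $\tilde{x} \in A_\omega$ (possible since $A^\omega = A_\omega / J_A$) and then subtract an element of the trace kernel ideal $J_A$ so that the resulting lift $x$ commutes with $A$ exactly in $A_\omega$, rather than merely modulo $J_A$. Since $y$ is central in $A^\omega$, we have $[\tilde{x}, a] \in J_A$ for every $a \in A$, and the task reduces to absorbing each of these commutators using the ideal structure of $J_A$.

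The technical heart of the argument is that $J_A$ is a $\sigma$-ideal of $A_\omega$ in the sense of Kirchberg: for any separable $\mathrm{C}^*$-subalgebra $D \subseteq A_\omega$ one can produce a positive contraction $e \in J_A$ which is both quasi-central for $D$ (meaning $[e, b] = 0$ for every $b \in D$) and an exact unit on $D \cap J_A$ (meaning $ed = d$ for every $d \in D \cap J_A$). My approach to this would be to fix a countable dense subset $\{b_k\}$ of $D$ together with a countable dense subset of $D \cap J_A$, exploit the fact that the norm-closed two-sided ideal $J_A$ admits a norm-approximate unit of positive contractions, and run a diagonal argument along $\omega$; countable saturation of the ultrapower then lets a single element realise all of the individual approximations simultaneously.

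Applying this to $D := C^*(A, \tilde{x}) \subseteq A_\omega$, which is separable because $A$ is, produces a positive contraction $e \in J_A$ commuting with $A$ and with $\tilde{x}$ and acting as a unit on each commutator $[\tilde{x}, a_k]$ for $\{a_k\}$ a countable dense subset of $A$, hence on every $[\tilde{x}, a]$ by continuity. I would then set $x := (1-e)\tilde{x} \in A_\omega$ and, using that $e$ commutes with $a$, compute
\[
[x, a] = (1-e)[\tilde{x}, a] = [\tilde{x}, a] - e[\tilde{x}, a] = 0
\]
for every $a \in A$. As $x - \tilde{x} = -e\tilde{x} \in J_A$, the class of $x$ in $A^\omega$ equals $y$, so $x \in A_\omega \cap A'$ is the desired preimage.

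The main obstacle is the simultaneous management of operator norm and uniform tracial $2$-norm when establishing the $\sigma$-ideal property. Elements of $J_A$ are small in $2$-norm but have arbitrary operator norm, whereas the quasi-central approximation $\|[e, b]\| \approx 0$ must hold in operator norm; the compactness of $T(A)$ is what allows the two norms to interact well enough along $\omega$ for the diagonal extraction to succeed. Once the $\sigma$-ideal property is in hand, the rest of the verification is direct.
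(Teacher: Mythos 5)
Your argument is correct and is essentially the proof the paper relies on: the paper deduces this proposition from \cite[Propositions 4.5(iii) and 4.6]{KR14}, which is precisely the route you describe --- $J_A$ is a $\sigma$-ideal of $A_\omega$ (established via a quasicentral approximate unit for $J_A$ together with Kirchberg's $\varepsilon$-test/saturation along $\omega$, where compactness of $T(A)$ gives the sequence-wise description of $J_A$ in terms of the uniform $2$-norm), after which one corrects an arbitrary lift $\tilde{x}$ by subtracting $e\tilde{x}$. The only cosmetic point is that $1-e$ must be read in the minimal unitisation when $A$ is non-unital, exactly as the paper's footnote indicates.
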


\subsection{Uniform Property Gamma}

We now recall the definition of uniform propery $\Gamma$ from \cite[Definition 2.1]{CETWW}.

\begin{definition}\label{defn:oldGamma}
Let $A$ be a separable $\mathrm{C}^*$-algebra with $T(A)$ non-empty and compact.
Then $A$ is said to have \emph{uniform property $\Gamma$} if for all $n\in\mathbb N$, there exist projections $p_1,\dots,p_n\in A^\omega \cap A'$ summing to $1_{A^\omega}$, such that
\begin{equation}\label{oldgamma.def}
\tau(ap_i)=\frac{1}{n}\tau(a),\quad a\in A,\ \tau\in T_\omega(A),\ i=1,\dots,n.
\end{equation}
\end{definition} 
Examples of $\mathrm{C}^*$-algebras with uniform property $\Gamma$ include $\Z$-stable $\mathrm{C}^*$-algebras and the non-$\Z$-stable Villadsen algebras of the first type. This definition is explored in detail in \cite{CETW}.

\subsection{Lifting Results} \label{prelim.lifting}
It was proved in \cite[Proposition 2.6]{AP77} that a finite set of orthogonal positive contractions in a quotient can always be lifted to a finite set of orthogonal positive contractions. 
From this, Loring deduced that the universal $\mathrm{C}^*$-algebra generated by $n$ orthogonal positive contractions $C_0(0,1]\otimes \mathbb{C}^{n}$ is projective \cite[Theorem 4.6]{Lo93}. 
In fact, for any finite-dimensional $\mathrm{C}^*$-algebra $F$, the $\mathrm{C}^*$-algebra $C_0(0,1] \otimes F$ is projective \cite[Theorem 4.9]{Lo93}. In other words, every $^*$-homomorphism $C_0(0,1] \otimes F \rightarrow A/I$ can be lifted to $^*$-homomorphism $C_0(0,1] \otimes F \rightarrow A$.

\subsection{Kirchberg's central sequence algebra}

Let $A$ be a $\sigma$-unital $\mathrm{C}^*$-algebra. Kirchberg's central sequence algebra of $A$, as introduced in \cite{Ki06}, is given by
\begin{align}
F_\omega (A) := (A_\omega \cap A')/ \mathrm{Ann}(A),
\end{align} 
where $\mathrm{Ann}(A) :=\{x \in A_\omega \mid xa = ax = 0 \text{ for all }a \in A\}$. As with $A_\omega$, we will abuse notation and use sequences $(a_n)$ to denote elements of $F_\omega (A)$.
Observe that any approximate unit $(e_n)$ for $A$ represents the unit of $F_\omega (A)$. 

Kirchberg proved that $F_\omega(A)$ is a stable invariant, i.e. $F_\omega(A)  \cong F_\omega(A \otimes \K)$ (see \cite[Appendix A.1]{Ki06}). For our purposes it will be important to have an explicit description of this isomorphism. 

\begin{lemma}\label{prop:central.seq.algebra.stable}
	Let $A$ be a $\sigma$-unital $\mathrm{C}^*$-algebra.
	The $^*$-homomorphism $\Phi:A_\omega \rightarrow (A \otimes \K)_\omega$ given by $(x_n) \mapsto (x_n \otimes 1_n)$ maps $A_\omega \cap A'$ into $(A \otimes \K)_\omega \cap (A \otimes \K)'$ and induces a $^*$-isomorphism $\bar{\Phi}:F_\omega(A) \cong F_\omega(A \otimes \K)$.
\end{lemma}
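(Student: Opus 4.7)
The plan is to handle three steps, with most of the work in the third. For well-definedness of $\Phi$: it is a $^*$-homomorphism on $\ell^\infty(A)$ since $\|x_n \otimes 1_n\| = \|x_n\|$, so it descends to ultrapowers. By density of $A \odot \K$ in $A \otimes \K$, checking that $\Phi$ sends $A_\omega \cap A'$ into $(A \otimes \K)_\omega \cap (A \otimes \K)'$ and $\mathrm{Ann}(A)$ into $\mathrm{Ann}(A \otimes \K)$ reduces to testing on elementary tensors $a \otimes e_{ij}$, and for $n \geq \max(i,j)$ the matrix unit $e_{ij}$ commutes with $1_n$ and satisfies $1_n e_{ij} = e_{ij}$. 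Hence $[x_n \otimes 1_n, a \otimes e_{ij}] = [x_n,a] \otimes e_{ij}$ and $(x_n \otimes 1_n)(a \otimes e_{ij}) = x_n a \otimes e_{ij}$, both of which vanish in the appropriate sense as $n \to \omega$. This gives well-definedness of $\bar\Phi$, and the same $(1,1)$-corner test provides injectivity: if $\Phi((x_n)) \in \mathrm{Ann}(A \otimes \K)$ then $\|x_n a\| \to 0$ and $\|a x_n\| \to 0$ for every $a \in A$, so $(x_n) \in \mathrm{Ann}(A)$.

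Surjectivity is the main step. Given $(y_n) \in (A \otimes \K)_\omega \cap (A \otimes \K)'$, the plan is to invert $\bar\Phi$ by extracting the $(1,1)$-corner of $y_n$. For each $i,j \in \N$, the matrix unit $e_{ij}$ acts as a multiplier of $A \otimes \K$, so the compression $y \mapsto e_{ii} y e_{jj}$ lies in $A \otimes e_{ii}\K e_{jj} = A \otimes \mathbb{C} e_{ij}$ and yields a contractive linear slice map $\sigma_{ij}\colon A \otimes \K \to A$ via $\sigma_{ij}(y) \otimes e_{ij} = e_{ii} y e_{jj}$. I would then set $x_n := \sigma_{11}(y_n)$ and $y_n^{(ki)} := \sigma_{ki}(y_n)$, all bounded in $A$. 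Applying $\sigma_{11}$ to the norm-vanishing commutator $[y_n, a \otimes e_{11}]$ immediately gives $(x_n) \in A_\omega \cap A'$.

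It then remains to show $\Phi((x_n)) - (y_n) \in \mathrm{Ann}(A \otimes \K)$. Testing against $a \otimes e_{ij}$ the product decomposes as
\[ (x_n \otimes 1_n - y_n)(a \otimes e_{ij}) = (x_n - y_n^{(ii)}) a \otimes e_{ij} - \sum_{k \neq i} y_n^{(ki)} a \otimes e_{kj}, \]
and each piece should vanish. Slicing $[y_n, a \otimes e_{ii}] \to 0$ by $\sigma_{ki}$ for $k \neq i$ annihilates the off-diagonal sum; slicing $[y_n, a \otimes e_{1i}] \to 0$ by $\sigma_{1i}$ yields $y_n^{(11)} a - a y_n^{(ii)} \to 0$, and combining this with $[y_n^{(ii)}, a] \to 0$ (obtained by applying $\sigma_{ii}$ to $[y_n, a \otimes e_{ii}]$) gives $(x_n - y_n^{(ii)}) a \to 0$. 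The right-multiplication analogue is symmetric, so density of $A \odot \K$ finishes the argument. The principal obstacle is exactly this final step: centrality of $(y_n)$ has to be invoked twice at each matrix position $i$ — once to annihilate the off-diagonal slices against $a$, and once to force all diagonal slices to agree with $x_n$ modulo $a$ — and the slicing estimates must be packaged uniformly enough across $(i,j)$ that density completes the proof.
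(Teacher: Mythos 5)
Your strategy is the same as the paper's: take the $(1,1)$-corner $x_n=\sigma_{11}(y_n)$ as the candidate preimage, read off centrality of $(x_n)$ from the $(1,1)$-entry of $[y_n,a\otimes e_{11}]$, and verify $\Phi((x_n))\equiv (y_n)$ modulo $\mathrm{Ann}(A\otimes\K)$ by testing against elementary tensors. There is, however, one step in your verification that does not close as written: the off-diagonal term $\sum_{k\neq i} y_n^{(ki)}a\otimes e_{kj}$ is an \emph{infinite} sum, and showing that each slice $\|y_n^{(ki)}a\|\to 0$ separately (by applying $\sigma_{ki}$ to the commutator) does not control the norm of the sum, which is an $\ell^2$-type column norm in $k$. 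The uniformity you flag at the end is needed across $k$ inside this sum, not across the test elements $(i,j)$ --- for the density step no uniformity is required, since an element $w$ of $(A\otimes\K)_\omega$ lies in $\mathrm{Ann}(A\otimes\K)$ as soon as $wc=cw=0$ for each $c$ in a dense subset, by boundedness of $w$. The repair is to treat the off-diagonal sum as a single object: it equals $(1-e_{ii})\,y_n\,(a\otimes e_{ij})$, where $1-e_{ii}$ denotes the multiplier projection $1_{M(A\otimes\K)}-1\otimes e_{ii}$; writing $a\otimes e_{ij}=(a\otimes e_{ii})(1\otimes e_{ij})$ and commuting $y_n$ past $a\otimes e_{ii}$ gives $\|(1-e_{ii})y_n(a\otimes e_{ij})\|\leq\|[y_n,a\otimes e_{ii}]\|\to 0$, since $(1-e_{ii})(a\otimes e_{ii})=0$.

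For comparison, the paper sidesteps all of this entrywise bookkeeping with the single sandwich identity $(a^4\otimes e_{rs})\,z=(a\otimes e_{r1})(a\otimes e_{11})\,z\,(a\otimes e_{11})(a\otimes e_{1s})$, valid because $z$ is central: the middle compression $(a\otimes e_{11})z(a\otimes e_{11})=az_{11}^{(n)}a\otimes e_{11}$ coincides exactly with $(a\otimes e_{11})\Phi(x)(a\otimes e_{11})$, so $z$ may be replaced by $\Phi(x)$ in one stroke and density of $\mathrm{span}\{a^4\otimes e_{rs}\}$ finishes the argument. Your diagonal and off-diagonal estimates are essentially the unpacked version of that identity; with the multiplier-projection fix above, your proof is complete.
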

\begin{proof}
The only assertion that is not immediate is the surjectivity of $\bar{\Phi}$. Let $z = (z_n) \in (A \otimes \K)_\omega \cap (A \otimes \K)'$. Write $z_n = \sum_{i,j} z^{(n)}_{ij} \otimes e_{ij}$. Set $x = (z^{(n)}_{11})$. Since $z$ commutes with $a \otimes e_{11}$ for all $a \in A$, it follows that $x \in A_\omega \cap A'$ by considering the $(1,1)$ entry of $z_n(a \otimes e_{11}) - (a \otimes e_{11})z_n$. 
	
We need to show that $z$ and $\Phi(x) = (z^{(n)}_{11} \otimes 1_n)$ agree mod $\mathrm{Ann}(A \otimes \K)$. Let $a \in A$ and $r,s \in \N$. Since $z, \Phi(x) \in (A \otimes \K)'$, we have
\begin{align}
	(a^4 \otimes e_{rs}) \, z &= (a \otimes e_{r1}) \, (a \otimes e_{11}) \, z \,(a \otimes e_{11}) \, (a \otimes e_{1s}) \notag\\
		&= (a \otimes e_{r1}) \, (a \otimes e_{11}) \, \Phi(x) \,(a \otimes e_{11}) \, (a \otimes e_{1s}) \notag\\
		&= (a^4 \otimes e_{rs}) \, \Phi(x). 
\end{align}	  
The result follows as $\mathrm{span}\{a^4 \otimes e_{rs}: a \in A,r,s \in \N\}$ is dense in $A \otimes \K$.		
\end{proof}

An alternative description of the isomorphism $\bar{\Phi}$ using multiplier algebras is given in \cite[Proposition 1.9]{BS16}.

\subsection{Cuntz equivalence and strict comparison}\label{subsection:Cuntz.equiv}

Let $a,b \in A_+$. It is said that $a$ is \emph{Cuntz sub-equivalent} to $b$, denoted $a \precsim b$, if there exists a sequence $(x_n)$ such that $a = \lim_{n \to \infty} x_n^* b x_n$.
A positive element $a$ is \emph{Cuntz equivalent} to $b$ if $a \precsim b$ and $b \precsim a$. We denote this by $a \sim b$ and write $[a]$ for the Cuntz equivalence class of a positive element $a$.
The \emph{Cuntz semigroup} of $A$ is defined as $\mathrm{Cu}(A):= (A \otimes \K)_+ / \sim$ equipped with orthogonal addition and the order induced by Cuntz sub-equivalence. We refer to \cite{To11} for more details about this construction.

The cone of functionals on $\mathrm{Cu}(A)$ is denoted by $F(\mathrm{Cu}(A))$ and its dual cone by $L(F(\mathrm{Cu}(A)))$; we refer to \cite[Section 4]{ERS11} for the corresponding definitions. 
The \emph{rank function} $d_\tau: (A \otimes \K)_+ \to \mathbb{R}$ associated to a lower semicontinuous trace $\tau$ is given by $d_\tau(a) = \lim_{n \to \infty} \tau(a^{1/n})$.
Every rank function $d_\tau$ induces a functional on $\mathrm{Cu}(A)$, and by \cite[Proposition 4.2]{ERS11}, the map $\tau \mapsto d_\tau$ is a bijection between the cone of lower semicontinuous quasitraces on $A$ and $F(\mathrm{Cu}(A))$. By a celebrated theorem of Haagerup, all lower semicontinuous quasitraces on an exact C$^*$-algebra are traces.\footnote{See \cite{Ha14} for the unital case and \cite[Remark 2.29(i)]{BK04} for how to deduce the general case from the unital case.} 

The Cuntz semigroup $\mathrm{Cu}(A)$ is said to be \emph{almost unperforated} if $[a] \leq [b]$ whenever $(k+1)[a] \leq k[b]$ for some $k \in \mathbb{N}$. Suppose $A$ is simple and exact with at least one trace. Then $\mathrm{Cu}(A)$ is almost unperforated if and only if $A$ has \emph{strict comparison}, i.e.\ $[a] \leq [b]$ whenever $d_\tau([a]) < d_\tau([b])$ for all $\tau \in T^+(A)$ that satisfy $d_\tau([b])=1$; see \cite[Proposition 2.1]{OPR} for example.

\section{Stabilising uniform property gamma}

\subsection{Generalised traces on ultraproducts}

In this section, we recall the definition of generalised limit traces and how they can be used to define bounded traces on the Kirchberg central sequence algebra.

\begin{definition}[{\cite[Definition 2.1]{Szabo19}}]
	A trace $\tau: \ell^\infty(A)_+ \to [0, \infty]$ is called a \emph{generalised limit trace} if there is a sequence $(\tau_n)$ of lower semicontinuous traces on $A$ such that
	\begin{equation}\label{eqn:gen_limit_trace}
	\tau((a_n)) = \sup_{\epsilon>0} \lim_{n \to \omega} \tau_n((a_n-\epsilon)_+), \qquad (a_n)\in \ell^\infty(A)_+.
	\end{equation}
	A trace on $A_\omega$ is  called a generalised limit trace if it is induced by a generalised limit trace on $\ell^\infty(A)$. Write $\widetilde{T}_\omega(A)$ to denote the set of generalised limit traces on $A_\omega$.
\end{definition}
\begin{remark}
	The left hand side of \eqref{eqn:gen_limit_trace} is the \emph{lower semicontinuous regularisation} of the trace given by $\tau((a_n)) = \lim_{n\to\omega} \tau(a_n)$; see \cite[Remark 2.27 (iv)]{BK04}. This step is necessary to ensure that generalised limit traces are lower semicontinuous and that the induced trace on $A_\omega$ is well defined. 
\end{remark}

There is no need to consider generalised limit traces in the simple unital setting, as all non-trivial examples of generalised limit traces can be obtained as multiples of limit traces. We record this below in a slightly more general form. At the other extreme, for stable $\mathrm{C}^*$-algebras there are no limit traces, but generalised limit traces may exist.

\begin{proposition}\label{prop:LimitTracesSuffice}
	Let $A$ be a simple separable $\mathrm{C}^*$-algebra with $T^+(A) \neq \{0\}$. Suppose $T^+(A) = T_b(A)$ and $T(A)$ is compact (as happens when $A$ is unital). Then every generalised limit trace $\tau \in \widetilde{T}_\omega(A)$ that is finite on some non-zero positive element of $A$ is a multiple of a limit trace. 
\end{proposition}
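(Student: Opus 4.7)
The plan is to show that the sequence of traces defining $\tau$ has norms that are $\omega$-bounded; once this is established, rescaling yields the desired limit trace. Let $(\tau_n)$ be a sequence of lower semicontinuous traces on $A$ representing $\tau$ via \eqref{eqn:gen_limit_trace}. Since $A$ is simple, each non-zero $\tau_n$ has dense domain, so the hypothesis $T^+(A)=T_b(A)$ forces each $\tau_n$ to be a bounded trace. Let $b\in A_+\setminus\{0\}$ be a positive element on which $\tau$ is finite, so $\tau(\iota(b)) = \sup_{\epsilon>0}\lim_{n\to\omega}\tau_n((b-\epsilon)_+) < \infty$.

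The central step is to use simplicity and compactness of $T(A)$ to bound $\|\tau_n\|$ uniformly. Pick $\epsilon>0$ small enough that $(b-\epsilon)_+ \neq 0$. Since $A$ is simple, every tracial state $\sigma\in T(A)$ is faithful on positive elements, so $\sigma\mapsto \sigma((b-\epsilon)_+)$ is a strictly positive function on $T(A)$. On the norm-bounded set $T(A)$, the topology of pointwise convergence on the Pedersen ideal agrees with the weak-$^*$ topology inherited from $A^*$, so this function is continuous. By compactness of $T(A)$, we obtain
\begin{equation}
\delta := \inf_{\sigma\in T(A)}\sigma((b-\epsilon)_+) > 0.
\end{equation}
Consequently $\tau_n((b-\epsilon)_+)\geq \delta\|\tau_n\|$ for every $n$ with $\tau_n\neq 0$, and combining with $\lim_{n\to\omega}\tau_n((b-\epsilon)_+)\leq \tau(\iota(b))<\infty$ gives $c:=\lim_{n\to\omega}\|\tau_n\|<\infty$.

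With this in hand, define $\sigma_n := \tau_n/\|\tau_n\|\in T(A)$ when $\tau_n\neq 0$, and pick any fixed element of $T(A)$ otherwise. The associated limit trace $\sigma\in T_\omega(A)$ satisfies $\sigma((a_n))=\lim_{n\to\omega}\sigma_n(a_n)$. To verify $\tau = c\sigma$, take $(a_n)\in (A_\omega)_+$ and use the elementary inequality
\begin{equation}
\tau_n(a_n) - \epsilon\|\tau_n\| \;\leq\; \tau_n((a_n-\epsilon)_+) \;\leq\; \tau_n(a_n).
\end{equation}
Taking $\lim_{n\to\omega}$ (both sides are bounded by $\|\tau_n\|\|a_n\|$, hence $\omega$-convergent), and then $\sup_{\epsilon>0}$, gives $\tau((a_n)) = \lim_{n\to\omega}\tau_n(a_n)$. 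Finally, since $\|\tau_n\|\to c$ along $\omega$ and $(\sigma_n(a_n))$ is bounded, $\lim_{n\to\omega}\tau_n(a_n) = \lim_{n\to\omega}\|\tau_n\|\sigma_n(a_n) = c\sigma((a_n))$.

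The main obstacle is the uniform lower bound $\delta>0$ in the second paragraph; this is the one place where both simplicity (for faithfulness of traces) and compactness of $T(A)$ genuinely enter. The rest of the argument is a careful bookkeeping of how the lower semicontinuous regularisation in \eqref{eqn:gen_limit_trace} reduces to an ordinary limit once the norms are controlled.
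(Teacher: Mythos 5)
Your argument follows the same route as the paper's proof --- bound $\lim_{n\to\omega}\|\tau_n\|$ using simplicity and compactness of $T(A)$, then normalise --- and your second and third paragraphs are correct and carefully written: working with $(b-\epsilon)_+$ for a fixed small $\epsilon$ is exactly what is needed to compare $\lim_{n\to\omega}\tau_n(\cdot)$ with the regularised value $\tau(\iota(b))$, and the final bookkeeping reducing the regularisation to an ordinary limit is fine.

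There is, however, a genuine gap in your first paragraph: the claim that ``since $A$ is simple, each non-zero $\tau_n$ has dense domain'' is false. The domain of a lower semicontinuous trace is an ideal, so by simplicity it is either dense \emph{or zero}. The trace $\tr_\infty$ defined by $\tr_\infty(a)=\infty$ for every non-zero $a\in A_+$ is a non-zero lower semicontinuous trace with domain $\{0\}$; it is not densely defined, hence does not belong to $T^+(A)$, and so the hypothesis $T^+(A)=T_b(A)$ says nothing about it. Consequently you cannot yet conclude that every non-zero $\tau_n$ is bounded, and the quantity $\|\tau_n\|$ in your key inequality $\tau_n((b-\epsilon)_+)\geq\delta\|\tau_n\|$ is undefined for any $n$ with $\tau_n=\tr_\infty$. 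The repair is short and is precisely the first step of the paper's proof: if $\{n\in\N:\tau_n=\tr_\infty\}\in\omega$, then for $\epsilon$ small enough that $(b-\epsilon)_+\neq 0$ one gets $\lim_{n\to\omega}\tau_n((b-\epsilon)_+)=\infty$ and hence $\tau(\iota(b))=\infty$, contradicting the finiteness hypothesis; since $\omega$ is an ultrafilter, $\{n\in\N:\tau_n\neq\tr_\infty\}\in\omega$, so one may assume every $\tau_n\neq\tr_\infty$. Only then does your dichotomy apply, after which the rest of your proof goes through unchanged.
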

\begin{proof}
Let $(\tau_n)$ be a sequence of lower semicontinuous traces on $A$ that induce $\tau$. Write $\tr_\infty$ for the lower semicontinuous trace on $A$ with $\tr_\infty(a) = \infty$ for all non-zero $a \in A_+$. If it were true that $\{n \in \N: \tau_n = \tr_\infty\} \in \omega$, then $\tau(a) = \infty$ for all non-zero $a \in (A_\omega)_+$, contradicting our hypothesis. Hence, $\{n \in \N: \tau_n \neq \tr_\infty\} \in \omega$, since $\omega$ is a ultrafilter. Therefore, we may assume without loss of generality that $\tau_n \neq \tr_\infty$ for all $n \in \N$. Since $A$ is simple and $T^+(A) = T_b(A)$, it follows that all the $\tau_n$ are bounded.   

Suppose now that $\tau(a)>0$ for some non-zero $a \in A_+$.
Since $T(A)$ is compact and non-empty, $\alpha = \min_{\sigma \in T(A)} \sigma(a)$ exists and simplicity ensures that $\alpha > 0$. For all $n \in \N$, we have $\tau_n(a) \geq \alpha\|\tau_n\|$. So $\lim_{n \to \omega} \|\tau_n\| \leq \alpha^{-1}\tau(a) < \infty$. Hence, $\tau$ is bounded with $\|\tau\| \leq \lim_{n \to \omega} \|\tau_n\|$. In fact, this is an equality, as there is a sequence $(a_n)$ of contractions with $|\tau_n(a_n)| > \|\tau_n\| - \tfrac{1}{n}$. Let $\bar{\tau}_n \in T(A)$ be the tracial state obtained by normalising $\tau_n$. (If $\tau_n = 0$ then $\bar{\tau}_n$ can be chosen arbitrarily.) Let $\bar{\tau}$ be the limit trace corresponding to $(\bar{\tau}_n)$. Then $\tau = \|\tau\|\bar{\tau}$.
\end{proof}

As indicated in \cite[Remark 2.3]{Szabo19}, generalised limit traces can be used to define bounded traces on $F_\omega(A)$.
\begin{definition}
For every $\tau \in \widetilde{T}_\omega(A)$ and $a \in A_+$ define an (extended) trace by
\begin{equation}
\tau_a : (A_\omega \cap A')_+ \to [0, \infty], \quad x \mapsto \tau(ax).
\end{equation}
When $\tau(a)< \infty$, we have $\tau_a(x) \leq \|x\|\tau(a)$, so the trace $\tau_a$ is bounded. Moreover, $\tau_a(\mathrm{Ann}(A))=0$. Thus, there is also an induced a trace on $F_\omega (A)$ with the same norm. This trace will also be denoted by $\tau_a$.
\end{definition}

Observe that if $A$ is $\sigma$-unital with approximate unit $(e_n)$, we obtain $\|\tau_a\| = \tau(a)$ by taking $x = (e_n)$.

\subsection{Stabilised property gamma}

In this section, we introduce our definition of stabilised property $\Gamma$. We shall then establish its relationship to uniform property $\Gamma$. Finally, we shall prove that stabilised property $\Gamma$ is invariant under stable isomorphism.

\begin{definition}\label{def:gamma}
	Let $A$ be a separable $\mathrm{C}^*$-algebra. Then $A$ has \textit{stabilised property $\Gamma$} if for any $n \in \N$ there exist pairwise orthogonal positive contractions $e_1, \ldots, e_n \in F_\omega(A)$ such that for all $a \in A_+$ and $\tau \in \widetilde{T}_\omega(A)$ with $\tau(a) < \infty$,
	\begin{equation}
	 \tau_a(e_i) = \frac{1}{n} \tau(a),\quad \quad i=1,\ldots,n.
	\label{def.prop.Gamma.eq}
	\end{equation}
\end{definition}

We now record the relationship between stabilised property $\Gamma$ and uniform property $\Gamma$ in the simple setting.  

\begin{proposition}\label{prop:sGamma.implies.uGamma}
Let $A$ be a simple separable $\mathrm{C}^*$-algebra with $T(A)$ compact and non-empty.
\begin{itemize}
	\item[(i)] If $A$ has stabilised property $\Gamma$ then $A$ has uniform property $\Gamma$.
	\item[(ii)] The reverse implication holds whenever $A$ is unital or more generally whenever $T^+(A) = T_b(A)$.
\end{itemize}
\end{proposition}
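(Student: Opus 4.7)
For part (i), my plan is to lift the orthogonal positive contractions witnessing stabilised property $\Gamma$ through the chain $F_\omega(A) \twoheadleftarrow A_\omega \cap A' \twoheadrightarrow A^\omega \cap A'$ and then promote them to projections in $A^\omega \cap A'$. The projectivity of $C_0(0,1] \otimes \mathbb{C}^n$ recalled in Section~\ref{prelim.lifting} yields pairwise orthogonal positive contractions $\tilde{e}_1, \dots, \tilde{e}_n \in A_\omega \cap A'$ lifting the given $e_i$. Any limit trace $\tau \in T_\omega(A)$ coincides with its associated generalised limit trace on positive elements, so the stabilised $\Gamma$ identity gives $\tau(a\tilde{e}_i) = \tau_a(e_i) = \tfrac{1}{n}\tau(a)$ for every $a \in A_+$, and hence for every $a \in A$ by linearity.

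To upgrade the images $\bar{e}_i \in A^\omega \cap A'$ to projections summing to $1_{A^\omega}$, fix a countable approximate unit $(u_m)$ of $A$. Since $T(A)$ is compact and the continuous functions $\tau \mapsto \tau(u_m)$ increase pointwise to $1$, Dini's theorem forces uniform convergence, and the resulting estimate $\|\iota(u_m) - 1_{A^\omega}\|_{A^\omega}^2 \leq 2\sup_{\tau \in T(A)}(1 - \tau(u_m))$ shows $\iota(u_m) \to 1_{A^\omega}$ in norm. Passing to the limit $m \to \infty$ in $\tau(\iota(u_m)\bar{e}_i) = \tfrac{1}{n}\tau(\iota(u_m))$ then yields $\tau(\bar{e}_i) = \tfrac{1}{n}$ for every $\tau \in T_\omega(A)$, so $1_{A^\omega} - \sum_i \bar{e}_i$ is a positive element on which every $\tau \in T_\omega(A)$ vanishes, and therefore must equal zero by the separating property of limit traces on $A^\omega$. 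Combined with pairwise orthogonality, the identity $\sum_i \bar{e}_i = 1_{A^\omega}$ forces $\bar{e}_i = \bar{e}_i \sum_j \bar{e}_j = \bar{e}_i^2$, so the $p_i := \bar{e}_i$ give the pairwise orthogonal projections required for uniform property $\Gamma$.

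For part (ii), I reverse the process. Central surjectivity (Proposition~\ref{CentSurject}) together with the projectivity result lifts the projections $p_i \in A^\omega \cap A'$ to pairwise orthogonal positive contractions $\tilde{p}_i \in A_\omega \cap A'$, and their images $e_i \in F_\omega(A)$ are pairwise orthogonal positive contractions. To verify the trace condition, fix $a \in A_+$ and $\tau \in \widetilde{T}_\omega(A)$ with $\tau(a) < \infty$; the case $\tau(a) = 0$ is trivial since then $\tau_a(e_i) \leq \|e_i\|\tau(a) = 0$, and otherwise Proposition~\ref{prop:LimitTracesSuffice}, whose hypotheses hold by our standing assumptions, writes $\tau = c\bar{\tau}$ for some $c > 0$ and some limit trace $\bar{\tau} \in T_\omega(A)$. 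Since $\bar{\tau}$ factors through $A^\omega$, where $\tilde{p}_i$ represents $p_i$, we compute $\tau_a(e_i) = \tau(a\tilde{p}_i) = c\bar{\tau}(\iota(a)p_i) = \tfrac{c}{n}\bar{\tau}(a) = \tfrac{1}{n}\tau(a)$. The principal technical point of the whole proof is the Dini-style uniform-convergence argument in part (i), which is precisely where the compactness of $T(A)$ becomes essential in the absence of a unit in $A$.
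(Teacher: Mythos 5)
Your argument follows the same route as the paper's proof: in (i) you lift through $F_\omega(A) \twoheadleftarrow A_\omega \cap A' \twoheadrightarrow A^\omega \cap A'$, and in (ii) you reverse the process using central surjectivity, the projectivity of $C_0(0,1]\otimes\mathbb{C}^n$, and Proposition \ref{prop:LimitTracesSuffice}. Part (ii) is correct and matches the paper essentially verbatim. The one difference in (i) is that where the paper simply cites \cite[Lemma 1.10]{CETWW} and \cite[Proposition 1.7]{CETW} for the facts that the $\bar{e}_i$ sum to $1_{A^\omega}$ and are projections, you reprove these inline; your reconstruction is structurally right, but it contains one incorrect estimate that you should repair.

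The inequality $\|\iota(u_m)-1_{A^\omega}\|_{A^\omega}^2 \leq 2\sup_{\tau\in T(A)}(1-\tau(u_m))$ is false if $\|\cdot\|_{A^\omega}$ denotes the $\mathrm{C}^*$-norm of the quotient $A_\omega/J_A$: a positive contraction with uniformly small trace need not have small norm in $A^\omega$ (consider a projection of small trace), and for non-unital $A$ one typically has $\|1_{A^\omega}-\iota(u_m)\|=1$ for \emph{every} $m$, because $u_m$ retains spectrum near $0$ carrying positive trace-mass in the tracial GNS representations through which the quotient norm is computed. What Dini's theorem actually yields is convergence in the uniform trace $2$-norm: $\sup_{\tau\in T_\omega(A)}\tau\bigl((1_{A^\omega}-\iota(u_m))^2\bigr) \leq \sup_{\sigma\in T(A)}(1-\sigma(u_m)) \to 0$. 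This weaker convergence is all you need: by Cauchy--Schwarz, $|\tau(\iota(u_m)\bar{e}_i)-\tau(\bar{e}_i)| \leq \tau\bigl((1_{A^\omega}-\iota(u_m))^2\bigr)^{1/2}$ uniformly over $\tau\in T_\omega(A)$, so you still obtain $\tau(\bar{e}_i)=\tfrac{1}{n}$ for every limit trace, and the rest of your argument (limit traces separate positive elements of $A^\omega$, hence $\sum_i\bar{e}_i=1_{A^\omega}$, hence $\bar{e}_i=\bar{e}_i^2$) goes through unchanged. So the gap is a misidentification of the relevant norm rather than a structural flaw, but as written the key displayed estimate is not true.
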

\begin{proof}
	(i): Suppose $A$ has stabilised property $\Gamma$. Given $n \in \N$, let $e_1,\ldots,e_n \in F_\omega(A)$ be orthogonal positive contractions such that \eqref{def.prop.Gamma.eq} holds. Lifting the $e_i$ to orthogonal positive contractions in $A_\omega \cap A'$ and then descending to $A^\omega \cap A'$, we get orthogonal positive contractions $e_1',\ldots,e_n' \in A^\omega \cap A'$. Since limit traces are a subset of the generalised limit traces, we have 
\begin{equation}
	\tau(ae_i') = \tau_a(e_i) = \frac{1}{n}\tau(a)
\end{equation}  
for all $a \in A_+$ and $\tau  \in T_\omega(A)$. By \cite[Lemma 1.10]{CETWW}, we have $\sum_i e_i' = 1_{A^\omega}$. By \cite[Proposition 1.7]{CETW}, the $e_i'$ are projections. Therefore, $A$ has uniform property $\Gamma$.

(ii): Suppose $A$ has uniform property $\Gamma$. Let $p_1,\ldots,p_n \in A^\omega \cap A'$ be orthogonal projections summing to $1_{A^\omega}$ as in Definition \ref{defn:oldGamma}. First, lift the $p_i$ to orthogonal positive contractions in $A_\omega \cap A'$ using Proposition \ref{CentSurject} and Section \ref{prelim.lifting}. Then descend to $F_\omega(A)$ to get orthogonal positive contractions $e_1,\ldots,e_n \in F_\omega(A)$.

Whenever $A$ is unital, or more generally whenever $T^+(A) = T_b(A)$, Proposition \ref{prop:LimitTracesSuffice} ensures that all non-trivial generalised limit traces are multiples of limit traces. Thus, it suffices to prove that for all $a \in A$ and $\tau \in T_\omega(A)$
\begin{equation}
 	\tau_a(e_i) = \frac{1}{n} \tau(a),\quad \quad i=1,\ldots,n.
\end{equation}
But this is equivalent to \eqref{oldgamma.def}.
\end{proof}

An example of a C$^*$-algebra with uniform property $\Gamma$ that does not have stabilised property $\Gamma$ is $(\K \otimes C^*_r(\mathbb{F}_2)) \oplus \M_{2^\infty}$, where $C^*_r(\mathbb{F}_2)$ is the reduced group C$^*$-algebra of the free group $\mathbb{F}_2$ and $\M_{2^\infty}$ is the CAR algebra. Indeed, the unbounded trace on $\K \otimes C^*_r(\mathbb{F}_2)$ will induce a generalised limit trace for which \eqref{def.prop.Gamma.eq} fails. 
(This follows from the fact that the group von Neumann algebra $\mathrm L(\mathbb{F}_2)$ is a II$_1$ factor that does not have property $\Gamma$ \cite{vN43}.) It would be interesting to know if this phenomenon can occur in the simple setting. 

\begin{question}
	Is there a simple $\mathrm{C}^*$-algebra with uniform property $\Gamma$ that does not have stabilised property $\Gamma$?
\end{question}

We now turn to proving that stabilised property $\Gamma$ is a stable invariant. On first glance this may appear to be immediate on account of the fact that $F_\omega(A) \cong F_\omega(A \otimes \K)$. However, we must also understand what such an isomorphism does to traces of the form $\tau_a$ for $\tau \in \widetilde{T}_\omega(A)$ and $a \in A_+$ with $\tau(a) < \infty$. 

Ultimately, we will need to carefully analyse the isomorphism $\bar{\Phi}$ of Lemma \ref{prop:central.seq.algebra.stable}. As a warm up, we first consider isomorphisms $\Psi:A \rightarrow B$, where the result follows from the functoriality of the various constructions.

\begin{proposition}\label{prop.isomorphism}
	Let $\Psi:A \rightarrow B$ be an isomorphism of $\mathrm{C}^*$-algebras, and write $\bar{\Psi}:F_\omega(A) \rightarrow F_\omega(B)$ for the induced isomorphism. Let $\sigma \in \widetilde{T}_\omega(B_\omega)$ be induced by the sequence of lower semicontinuous traces $(\sigma_n)$. Set $\tau_n = \sigma_n \circ \Psi$ for each $n \in \N$ and let $\tau \in \widetilde{T}_\omega(A_\omega)$ be the induced generalised limit trace. Then 
	\begin{equation}
	\sigma_{\Psi(a)} \circ \bar{\Psi} = \tau_a
	\end{equation}
	for all $a \in A$ with $\tau(a) < \infty$. 
\end{proposition}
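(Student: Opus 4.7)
The plan is to chase the identity through the layered constructions using functoriality. First I would extend $\Psi$ pointwise to a $^*$-isomorphism $\ell^\infty(A) \rightarrow \ell^\infty(B)$ which passes to a $^*$-isomorphism $A_\omega \rightarrow B_\omega$ (which I shall also denote by $\Psi$). Since $\Psi$ sends $A$ onto $B$, this restricts to an isomorphism $A_\omega \cap A' \rightarrow B_\omega \cap B'$ and carries $\mathrm{Ann}(A)$ onto $\mathrm{Ann}(B)$, giving rise to the induced isomorphism $\bar{\Psi}:F_\omega(A) \rightarrow F_\omega(B)$ of the statement.

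The key preliminary is to verify that $\sigma \circ \Psi = \tau$ as generalised limit traces on $(A_\omega)_+$. For $y = (y_n) \in (A_\omega)_+$, the fact that $^*$-homomorphisms commute with continuous functional calculus applied to the function $t \mapsto (t-\epsilon)_+$ gives $\Psi((y_n - \epsilon)_+) = (\Psi(y_n) - \epsilon)_+$ for every $\epsilon > 0$. Combining with $\tau_n = \sigma_n \circ \Psi$ and the defining formula \eqref{eqn:gen_limit_trace},
\begin{equation}
\sigma(\Psi(y)) = \sup_{\epsilon>0}\lim_{n \to \omega}\sigma_n\bigl((\Psi(y_n)-\epsilon)_+\bigr) = \sup_{\epsilon>0}\lim_{n \to \omega}\tau_n\bigl((y_n-\epsilon)_+\bigr) = \tau(y).
\end{equation}

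Taking $y = a$ as a constant sequence, this yields $\sigma(\Psi(a)) = \tau(a) < \infty$, so both $\tau_a$ and $\sigma_{\Psi(a)}$ are bounded traces on $F_\omega(A)$ and $F_\omega(B)$ respectively. Since both sides of the claimed identity are therefore bounded linear functionals on $F_\omega(A)$, it suffices to check equality on positive elements. Given positive $x \in F_\omega(A)$, lift it to a positive element $\tilde{x} \in A_\omega \cap A'$. Because $\tilde{x}$ commutes with $a$ and both are positive, the product $a\tilde{x}$ is positive in $A_\omega$, and $\Psi(a\tilde{x}) = \Psi(a)\Psi(\tilde{x})$ is a positive lift of $\Psi(a)\bar{\Psi}(x)$. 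Hence
\begin{equation}
\sigma_{\Psi(a)}(\bar{\Psi}(x)) = \sigma\bigl(\Psi(a)\Psi(\tilde{x})\bigr) = \sigma\bigl(\Psi(a\tilde{x})\bigr) = \tau(a\tilde{x}) = \tau_a(x),
\end{equation}
using $\sigma \circ \Psi = \tau$ in the penultimate step, which completes the proof.

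The argument is essentially a routine functoriality chase; the only point requiring care is the compatibility of $\Psi$ with the supremum-over-$\epsilon$ regularisation appearing in the definition of generalised limit trace, and this is handled by the functional calculus identity $\Psi \circ f_\epsilon = f_\epsilon \circ \Psi$ for $f_\epsilon(t) = (t-\epsilon)_+$. Everything else, including the passage to the quotient $F_\omega$ and the reduction to positive elements, is bookkeeping.
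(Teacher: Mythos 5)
Your proposal is correct and follows essentially the same route as the paper: the paper's proof is the single computation $\tau_a(x)=\sup_{\epsilon>0}\lim_{n\to\omega}\tau_n((a^{1/2}x_na^{1/2}-\epsilon)_+)=\sigma_{\Psi(a)}(\bar{\Psi}(x))$, whose crux is exactly your observation that $\Psi$ commutes with the functional calculus $t\mapsto(t-\epsilon)_+$ and hence with the lower semicontinuous regularisation. Factoring this out as the identity $\sigma\circ\Psi=\tau$ on $(A_\omega)_+$ before applying it to $a\tilde{x}=a^{1/2}\tilde{x}a^{1/2}$ is only a cosmetic reorganisation.
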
  
\begin{proof}
	Let $x \in F_\omega(A)_+$ be represented by the sequence of positive elements $(x_n)$. Unwinding the definitions, we get
	\begin{align}
		\tau_a(x) &= \sup_{\epsilon > 0} \lim_{n\to\omega} \tau_n((a^{1/2}x_na^{1/2} - \epsilon)_+) \notag \\
		&= \sup_{\epsilon > 0} \lim_{n\to\omega} \sigma_n(\Psi((a^{1/2}x_na^{1/2} - \epsilon)_+)) \notag \\
		&= \sup_{\epsilon > 0} \lim_{n\to\omega}  \sigma_n((\Psi(a^{1/2})\Psi(x_n)\Psi(a^{1/2})-\epsilon)_+)  \\
		&= \sigma_{\Psi(a)}(\bar{\Psi}(x)). \qedhere
	\end{align}
\end{proof}

We now proceed with the main technical result. We write $\bar{\Phi}:F_\omega(A) \rightarrow F_\omega(A \otimes \K)$ for the $^*$-isomorphism from Lemma \ref{prop:central.seq.algebra.stable} and, to facilitate computations, we shall also use the map $\Phi:A_\omega \rightarrow (A \otimes \K)_\omega$ that induces $\bar{\Phi}$.

\begin{proposition}\label{prop.stablisation}
	Let $A$ be a separable $\mathrm{C}^*$-algebra and let $(\sigma_n)$ be a sequence of lower semicontinuous traces on $A \otimes \K$. Write $\sigma_n = \tau_n \otimes \Tr$ with $(\tau_n)$ lower semicontinuous traces  on $A$.
	Let $\sigma$ and $\tau$ be the generalised limit traces induced by $(\sigma_n)$ and $(\tau_n)$, respectively. 
	Let $a \in (A \otimes \K)_+$ and write $a = \sum_{ij} a_{ij} \otimes e_{ij}$.
	
Suppose $\sigma(a) < \infty$. Then $\tau(a_{ii}) < \infty$ for all $i \in \mathbb{N}$. Moreover,
\begin{equation}
	\sigma_a \circ \bar{\Phi} = \sum_{i=1}^\infty \tau_{a_{ii}},
\end{equation}
where the sum converges in the norm topology on $T_b(F_\omega(A))$.
\end{proposition}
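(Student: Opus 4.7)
The plan rests on two workhorses: the trace identity $\sigma_n(f(y^*y)) = \sigma_n(f(yy^*))$ for $f \geq 0$ continuous with $f(0) = 0$, and the identity $\sigma_n = \tau_n \otimes \Tr$, under which $\sigma_n$ reads off only the diagonal of a matrix over $A$. Recall $\bar{\Phi}$ is implemented by $(x_n) \mapsto (x_n \otimes 1_n)$.

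\emph{Finiteness.} Fix $i \in \N$ and take $y := (1 \otimes e_{1i}) a^{1/2}$, so that $yy^* = a_{ii} \otimes e_{11}$ and $y^*y = a^{1/2}(1 \otimes e_{ii}) a^{1/2}$. The trace identity with $f(t) = (t - \epsilon)_+$ gives
\[
\tau_n((a_{ii} - \epsilon)_+) = \sigma_n((a^{1/2}(1 \otimes e_{ii}) a^{1/2} - \epsilon)_+) \leq \sigma_n((a - \epsilon)_+),
\]
where the inequality uses $a^{1/2}(1 \otimes e_{ii}) a^{1/2} \leq a$ together with the monotonicity of $b \mapsto \sigma_n((b - \epsilon)_+)$ on positive elements (a standard trace-distribution-function fact). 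Passing to $\sup_\epsilon \lim_{n \to \omega}$ yields $\tau(a_{ii}) \leq \sigma(a) < \infty$. Running the same argument with $1 \otimes 1_N$ replacing $1 \otimes e_{ii}$, and combining with the trace-integrated form of the scalar superadditivity $\sum_i \sigma_n((c_i - \epsilon)_+) \leq \sigma_n((\sum_i c_i - \epsilon)_+)$ applied to $c_i = a^{1/2}(1 \otimes e_{ii}) a^{1/2}$, gives $\sum_{i \leq N} \tau(a_{ii}) \leq \sigma(a)$ for every $N$, so $\sum_i \tau(a_{ii}) \leq \sigma(a) < \infty$. Hence $\sum_i \tau_{a_{ii}}$ converges in the norm topology on $T_b(F_\omega(A))$.

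\emph{Main formula.} Fix $x \in F_\omega(A)_+$ with positive representative $(x_n) \in (A_\omega \cap A')_+$. Applying the trace identity with $y = (x_n \otimes 1_n)^{1/2} a^{1/2}$ converts the left-hand side into $\sigma_a(\bar{\Phi}(x)) = \sigma(w)$, where
\[
w_n := (x_n \otimes 1_n)^{1/2} a (x_n \otimes 1_n)^{1/2} = \sum_{i, j \leq n} (x_n^{1/2} a_{ij} x_n^{1/2}) \otimes e_{ij}.
\]
Dually, applying the trace identity to each summand of the right-hand side and using the norm convergence established above gives $\sum_i \tau_{a_{ii}}(x) = \sigma(\hat{w})$, where $\hat{w}_n := \sum_i (a_{ii}^{1/2} x_n a_{ii}^{1/2}) \otimes e_{ii}$ is the diagonal matrix. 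Asymptotic centrality of $(x_n)$ with $A$ forces $\|x_n^{1/2} a_{ii} x_n^{1/2} - a_{ii}^{1/2} x_n a_{ii}^{1/2}\| \to 0$ along $\omega$ for each $i$, matching the diagonals of $w_n$ and $\hat{w}_n$ asymptotically; since $\sigma_n$ sees only diagonals, $\sigma_n(w_n) = \sigma_n(\hat{w}_n)$ exactly for every $n$.

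The main obstacle is that the off-diagonal entries of $w_n$ do not affect $\sigma_n(w_n)$ but do affect $\sigma_n((w_n - \epsilon)_+)$, so upgrading this total-trace equality to $\sigma(w) = \sigma(\hat{w})$ at the generalised-trace level is nontrivial. My strategy is to truncate via $a^{(N)} := (1 \otimes 1_N) a (1 \otimes 1_N) \to a$ in norm: on the truncation, both $w_n^{(N)}$ and $\hat{w}_n^{(N)}$ lie in $A \otimes \M_N$, where asymptotic centrality applies uniformly to all $N^2$ matrix entries simultaneously, enabling a direct identification $\sigma(w^{(N)}) = \sigma(\hat{w}^{(N)})$. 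Tail contributions are controlled by $\sum_{i > N} \tau(a_{ii}) \to 0$ from the norm convergence of the series, yielding the full identity in the limit $N \to \infty$.
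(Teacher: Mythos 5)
Your write-up correctly isolates the real difficulty --- that the off-diagonal entries of $w_n = \Phi(x)^{1/2}a\,\Phi(x)^{1/2}$ do not contribute to $\sigma_n(w_n)$ but do enter into $\sigma_n((w_n-\epsilon)_+)$, which is what the generalised limit trace actually evaluates --- but the proposed fix does not close it. Truncating to $a^{(N)} = (1\otimes 1_N)a(1\otimes 1_N)$ only reduces the infinite matrix to a finite one (a reduction the paper also performs, via lower semicontinuity of $\sigma$ and centrality of $\Phi(x)$); it does nothing about the obstacle itself, since $w^{(N)}$ still has off-diagonal entries $x_n^{1/2}a_{ij}x_n^{1/2}$ that are not small in norm. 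Asymptotic centrality, however uniformly applied to the $N^2$ entries, only lets you commute $x_n$ past elements of $A\otimes \M_N$; it does not identify the non-diagonal matrix $w^{(N)}$ with the diagonal matrix $\hat w^{(N)}$ in $(A\otimes\M_N)_\omega$, nor does it control $(w_n^{(N)}-\epsilon)_+$. So the phrase ``enabling a direct identification $\sigma(w^{(N)})=\sigma(\hat w^{(N)})$'' is exactly the assertion that needs proof, and no argument is given. (The sentence ``$\sigma_n(w_n)=\sigma_n(\hat w_n)$ exactly for every $n$'' is also false as stated, since the diagonals only agree asymptotically along $\omega$, but that is a secondary issue.)

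The missing idea is to apply the structure theorem for lower semicontinuous traces on stabilisations one level up: the restriction of the generalised limit trace $\sigma$ to the subalgebra $A_\omega\otimes\K\subseteq (A\otimes\K)_\omega$ is itself a lower semicontinuous trace on a $\mathrm{C}^*$-algebra of the form $B\otimes\K$ (with $B=A_\omega$), hence satisfies $\sigma\bigl(\sum_{ij}z_{ij}\otimes e_{ij}\bigr)=\sum_i\sigma(z_{ii}\otimes e_{11})$ for positive matrices over $A_\omega$. Since $w^{(N)}$ lies in $A_\omega\otimes\M_N$, this computes $\sigma(w^{(N)})$ directly from its diagonal entries without ever forming $(w^{(N)}-\epsilon)_+$ of the full matrix; the $\epsilon$-cutdowns are then only needed for single entries $z\otimes e_{11}$, where $(z\otimes e_{11}-\epsilon)_+=(z-\epsilon)_+\otimes e_{11}$ and one lands back on $\tau_n$. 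This is the content of the paper's equation for $\sigma$ on $A_\omega\otimes\K$ and is the step your argument lacks. The same observation applied to $\sigma|_{A\otimes\K}=\tau|_A\otimes\Tr$ gives $\sigma(a)=\sum_i\tau(a_{ii})$ at once, which both yields the finiteness claim and the norm convergence of $\sum_i\tau_{a_{ii}}$; your alternative route via the Rotfel'd-type superadditivity $\sum_i\sigma_n((c_i-\epsilon)_+)\le\sigma_n((\sum_ic_i-\epsilon)_+)$ is believable but is a genuine operator trace inequality, not a ``trace-integrated scalar fact,'' and would itself need justification.
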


\begin{proof}
We view $A_\omega \otimes \K$ as a subalgebra of $(A \otimes \K)_\omega$. Since $\sigma$ restricts to a lower semicontinuous trace on $A_\omega \otimes \K$, we have 
\begin{equation}\label{eqn:trOnStab}
	\sigma\left(\sum_{ij}z_{ij} \otimes e_{ij}\right) = \sum_i \sigma(z_{ii} \otimes e_{11})
\end{equation}
for any positive element $\sum_{ij}z_{ij} \otimes e_{ij} \in (A_\omega \otimes \K)_+$; see Section \ref{subsection:traces}.

We first consider the case that $a \in (A \otimes \M_N)_+$ for some $N \in \N$, so $a = 
\sum_{i,j=1}^N a_{ij} \otimes e_{ij}$. Let $\bar{x} \in F_\omega(A)_+$. Fix a positive lift $x \in A_\omega \cap A'$ of $\bar{x}$ and a representative sequence of positive elements $(x_n)$ for $x$. Recall that $\Phi$ is given by $(x_n) \mapsto (x_n \otimes 1_n )$. Then 
\begin{align}
	\sigma_a(\bar{\Phi}(\bar{x})) &= \sigma(a\Phi(x)) \notag\\
	&= \sigma\left(\left(\sum_{i,j=1}^N a_{ij}x_n \otimes e_{ij}\right)\right) \notag \\
	&= \sigma\left(\sum_{i,j=1}^N (a_{ij}x_n) \otimes e_{ij}\right) \notag \\		
	&\hspace{-1.5mm} \stackrel{\eqref{eqn:trOnStab}}{=} \sum_{i=1}^N \sigma((a_{ii}x_n) \otimes e_{11}), \notag \\
	&= \sum_{i=1}^N \sigma((a_{ii}x_n \otimes e_{11}))  
\end{align}
where in the second line we have used that $1_n$, the unit of $\M_n$, acts as unit on $e_{ij}$ whenever $n \geq N \geq i,j$, and that the first $N$ terms of the sequence do not affect which element of the ultrapower is represented. 
 
Since $e_{11}$ is a projection, we have
\begin{align}
	\sigma((a_{ii}x_n \otimes e_{11}))
	&=\sigma((a_{ii}^{1/2}x_na_{ii}^{1/2} \otimes e_{11})) \notag \\ 
	&= \sup_{\epsilon>0} \lim_{n\rightarrow\omega} \sigma_n ((a_{ii}^{1/2}x_na_{ii}^{1/2} \otimes e_{11} - \epsilon)_+) \notag \\
	&=  \sup_{\epsilon>0} \lim_{n\rightarrow\omega} \sigma_n ((a_{ii}^{1/2}x_na_{ii}^{1/2} - \epsilon)_+ \otimes e_{11}) \notag \\
	&=  \sup_{\epsilon>0} \lim_{n\rightarrow\omega} \tau_n ((a_{ii}^{1/2}x_na_{ii}^{1/2} - \epsilon)_+) \notag \\
	&= \tau((a_{ii}^{1/2}x_na_{ii}^{1/2}))\notag \\
	&= \tau((a_{ii}x_n)).
\end{align}
Hence, $\sigma_a(\bar{\Phi}(\bar{x})) = \sum_{i=1}^N \tau_{a_{ii}}(\bar{x})$.

We now consider the general case where $a \in (A \otimes \K)_+$. Write $a = \sum_{i,j} a_{ij} \otimes e_{ij}$ and let $p_N = 1_{A^\sim} \otimes 1_N \in A^\sim \otimes \K$.  As $\sigma$ is a lower semicontinuous trace and $\Phi$ maps central sequences to central sequences, we have
\begin{align}
	\sigma_a(\bar{\Phi}(\bar{x})) &= \sigma(a\Phi(x)) \notag \\
	&=  \lim_N \sigma(a^{1/2}p_N a^{1/2}\Phi(x)) \notag \\
	&=  \lim_N \sigma(a^{1/2}p_N\Phi(x)^{1/2}\Phi(x)^{1/2} p_N a^{1/2}) \notag \\
	&= \lim_N \sigma(\Phi(x)^{1/2}p_Na^{1/2}a^{1/2}p_N\Phi(x)^{1/2}) \notag \\
	&= \lim_N \sigma(p_Nap_N\Phi(x)) \notag \\
	&= \lim_N \sigma_{p_Nap_N}(\bar{\Phi}(\bar{x})).
\end{align}
 
Combining this with the first part of the proof, we obtain that $\sigma_a(\bar{\Phi}(\bar{x})) = \sum_{i=1}^\infty \tau_{a_{ii}}(\bar{x})$. Finally, since $\sum_{i=1}^\infty \|\tau_{a_{ii}}\| = \sum_{i=1}^\infty \tau(a_{ii}) = \sigma(a) < \infty$, the series $\sum_{i=1}^\infty \tau_{a_{ii}}$ is (absolutely) norm convergent in $F_\omega(A)^*$.
\end{proof}

\begin{theorem}\label{thm:upGamma.stable}
	Stabilised property $\Gamma$ is preserved under stable isomorphism, i.e.\ if $A$ has stabilised property $\Gamma$ and $A \otimes \K \cong B \otimes \K$, then $B$ has stabilised property $\Gamma$.
\end{theorem}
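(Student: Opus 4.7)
The plan is to decompose a stable isomorphism into the two elementary moves $A\rightsquigarrow A\otimes\K\cong B\otimes\K\rightsquigarrow B$, and to show that each step preserves stabilised property $\Gamma$. The $^*$-isomorphism step reduces to a direct application of Proposition \ref{prop.isomorphism}: if $\Psi:A\to B$ is a $^*$-isomorphism and $e_1,\dots,e_n\in F_\omega(A)$ witness stabilised property $\Gamma$ for $A$, then $\bar\Psi(e_1),\dots,\bar\Psi(e_n)$ are pairwise orthogonal positive contractions in $F_\omega(B)$; for any $\sigma\in\widetilde{T}_\omega(B)$ induced by $(\sigma_n)$ and $b\in B_+$ with $\sigma(b)<\infty$, the pulled-back sequence $(\sigma_n\circ\Psi)$ and element $a:=\Psi^{-1}(b)\in A_+$ satisfy $\tau(a)=\sigma(b)<\infty$, and Proposition \ref{prop.isomorphism} yields $\sigma_b(\bar\Psi(e_i))=\tau_a(e_i)=\tfrac{1}{n}\tau(a)=\tfrac{1}{n}\sigma(b)$.

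For the forward half of the stabilisation equivalence, I would propose $\bar\Phi(e_1),\dots,\bar\Phi(e_n)\in F_\omega(A\otimes\K)$ as witnesses for $A\otimes\K$. Given $\sigma\in\widetilde{T}_\omega(A\otimes\K)$ induced by a sequence $(\sigma_n=\tau_n\otimes\Tr)$ and $c=\sum_{ij}c_{ij}\otimes e_{ij}\in(A\otimes\K)_+$ with $\sigma(c)<\infty$, Proposition \ref{prop.stablisation} supplies the norm-convergent identity $\sigma_c\circ\bar\Phi=\sum_i\tau_{c_{ii}}$ together with (via its proof) the summability $\sum_i\tau(c_{ii})=\sigma(c)$. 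Applying stabilised property $\Gamma$ of $A$ to each pair $(\tau,c_{ii})$ and summing yields $\sigma_c(\bar\Phi(e_j))=\tfrac{1}{n}\sum_i\tau(c_{ii})=\tfrac{1}{n}\sigma(c)$, as required.

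The reverse half uses the small trick of testing generalised traces on $A$ through a matrix corner. Starting from witnesses $f_1,\dots,f_n\in F_\omega(A\otimes\K)$, set $e_j:=\bar\Phi^{-1}(f_j)$; then given $\tau\in\widetilde{T}_\omega(A)$ induced by $(\tau_n)$ together with $a\in A_+$ satisfying $\tau(a)<\infty$, apply the witness condition for $A\otimes\K$ not to $a$ but to the element $a\otimes e_{11}\in(A\otimes\K)_+$, against the generalised limit trace $\sigma$ on $A\otimes\K$ induced by $(\tau_n\otimes\Tr)$. Since the only non-zero diagonal entry of $a\otimes e_{11}$ is $a$ in position $(1,1)$, the sum of Proposition \ref{prop.stablisation} collapses to $\sigma_{a\otimes e_{11}}\circ\bar\Phi=\tau_a$; coupled with the straightforward identity $\sigma(a\otimes e_{11})=\tau(a)$, this produces $\tau_a(e_j)=\sigma_{a\otimes e_{11}}(f_j)=\tfrac{1}{n}\tau(a)$. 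The main technical obstacle, namely understanding how traces of the form $\sigma_c$ transform under $\bar\Phi$, has already been resolved by Proposition \ref{prop.stablisation}; once that is in hand, only the routine assembly above remains.
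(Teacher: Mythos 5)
Your proposal is correct and follows essentially the same route as the paper: reduce to the single stabilisation $A \rightsquigarrow A \otimes \K$ via Proposition \ref{prop.isomorphism}, push the witnesses through $\bar{\Phi}$ using Proposition \ref{prop.stablisation} for the forward direction, and test against $a \otimes e_{11}$ so the sum collapses to $\tau_a$ for the reverse direction. No substantive differences.
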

\begin{proof} 
By Proposition \ref{prop.isomorphism}, stabilised property $\Gamma$ is invariant under isomorphism. Hence, it suffices to show that $A$ has stabilised property $\Gamma$ if and only if $A \otimes \K$ does too. 

Suppose $e_1,\ldots,e_n \in F_\omega(A)$ are pairwise orthogonal positive contractions satisfying \eqref{def.prop.Gamma.eq} for all $a \in A_+$ and $\tau \in \widetilde{T}_\omega(A)$ with $\tau(a) < \infty$. Set $f_i = \bar{\Phi}(e_i) \in F_\omega(A \otimes \K)$. Then $f_1, \ldots, f_n$ are pairwise orthogonal positive contractions. Moreover, by Proposition \ref{prop.stablisation}, we have
\begin{equation} \label{eq.stab}
	\sigma_a(f_i) = \frac{1}{n}\sigma(a) 
\end{equation} 
for all $a \in (A \otimes \K)_+$ and $\sigma \in \widetilde{T}_\omega(A \otimes \K)$ with $\sigma(a) < \infty$. Therefore stabilised property $\Gamma$ passes to the stabilisation.

Conversely, suppose $f_1, \ldots, f_n$ are pairwise orthogonal positive contractions satisfying \eqref{eq.stab} for all $a \in (A \otimes \K)_+$ and $\sigma \in \widetilde{T}_\omega(A \otimes \K)$ with $\sigma(a) < \infty$. Set $e_i = \bar{\Phi}^{-1}(f_i)$. 
Then $e_1, \ldots, e_n$ are pairwise orthogonal positive contractions. Moreover, by Proposition \ref{prop.stablisation}, it follows that \eqref{def.prop.Gamma.eq} holds for all $\tau \in \widetilde{T}_\omega(A)$ for all $a \in A_+$ and $\tau \in \widetilde{T}_\omega(A)$ with $\tau(a) < \infty$. Indeed, if $\tau$ is induced by $(\tau_n)$, then take $\sigma$ to be induced by $(\tau_n \otimes \Tr)$ and note that $\sigma_{a \otimes e_{11}} \circ \bar{\Phi} = \tau_a$ by Proposition \ref{prop.stablisation}. Therefore stabilised property $\Gamma$ passes from $A \otimes \K$ to $A$.
\end{proof}

\begin{remark}
	It follows from Proposition \ref{prop.stablisation} and the proof of Theorem \ref{thm:upGamma.stable} that $\bar{\Phi}$ induces an affine homeomorphism between the weak$^*$-closed convex hull of traces in $F(A)$ of the form $\tau_a$ (with $\tau \in \widetilde{T}_\omega(A), a \in A_+$ such that $\tau(a)< \infty$)
	and the weak$^*$-closed convex hull of traces in $F(A \otimes \mathbb{K})$ of the form $\sigma_b$ (with $\sigma \in \widetilde{T}_\omega(A\otimes \mathbb{K}), b \in (A \otimes \mathbb{K})_+$ such that $\sigma(b)< \infty$). This is underlying principle behind the proof of invariance under stable isomorphism. 
\end{remark}

\section{$\Z$-stability}

In this section, we shall consider stabilised property $\Gamma$ in the setting of the Toms--Winter conjecture, and we shall prove Theorems \ref{thm:Main} and \ref{thm:Toms-Winter}. We begin by recalling the definition of the uniform McDuff property.

\begin{definition}[{\cite[Definition 4.2]{CETW}}]
	Let $A$ be a separable $\mathrm{C}^*$-algebra with $T(A)$ non-empty and compact. We say that $A$ is \emph{uniformly McDuff} if for each $n \in \mathbb{N}$ there exists a unital embedding $\M_n \to A^\omega \cap A'$.
\end{definition}
 
In the proof of Theorem \ref{thm:Main}, we shall access $\Z$-stability using \cite[Theorem 2.1]{Jac13}, which is based on \cite{MS12, Na13, Ki06, RW10}. We restate this result below for the benefit of the reader using the notation and terminology of this paper.{\footnote{Combining central surjectivity (Proposition \ref{CentSurject}) with the projectivity of $C(0,1] \otimes \M_n$ (Section \ref{prelim.lifting}), the uniform McDuff property is equivalent to the existence of \emph{uniformly tracially large order zero maps} $\M_n \rightarrow A_\omega \cap A'$; see \cite[Remark 4.3]{CETW}.}

\begin{theorem}[{cf. \cite[Theorem 2.1]{Jac13}}]\label{thm:SI}
	Let $A$ be a separable, simple, non-elementary, nuclear $\mathrm{C}^*$-algebra with strict comparison such that $T^+(A) = T_b(A)$ and $T(A)$ is non-empty and compact. If $A$ is uniformly McDuff then it is $\Z$-stable.
\end{theorem}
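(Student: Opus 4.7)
The plan is to follow the Matui--Sato strategy in the non-unital tracial setting, as carried out by Jacelon. A reformulation going back to Kirchberg and R{\o}rdam--Winter identifies $\Z$-stability of $A$ with the existence, for every $n \in \N$, of a c.p.c.\ order zero map $\phi_n : \M_n \to F_\omega(A)$ satisfying the subequivalence
\begin{equation}
	1_{F_\omega(A)} - \phi_n(1_n) \precsim \phi_n(e_{11})
\end{equation}
in $F_\omega(A)$; such maps can be glued, via the universal properties of the generalised dimension drop algebras, into a unital $^*$-homomorphism $\Z \to F_\omega(A)$, whence $A$ absorbs $\Z$ by Kirchberg's theorem.

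The first step is to produce order zero maps $\phi_n : \M_n \to F_\omega(A)$ with a tracially large image. The uniform McDuff hypothesis gives a unital $^*$-homomorphism $\M_n \to A^\omega \cap A'$. Using central surjectivity (Proposition \ref{CentSurject}) together with projectivity of $C_0(0,1] \otimes \M_n$ (Section \ref{prelim.lifting}), this lifts to a c.p.c.\ order zero map $\widetilde\phi_n : \M_n \to A_\omega \cap A'$, and $\phi_n$ is obtained by quotienting to $F_\omega(A)$. The key tracial property inherited from unitality in $A^\omega \cap A'$ is
\begin{equation}
	\tau_a(1_{F_\omega(A)} - \phi_n(1_n)) = 0, \qquad \tau \in T_\omega(A),\ a \in A_+,
\end{equation}
while simultaneously $\tau_a(\phi_n(e_{11})) = \tau(a)/n$ for all such $\tau$ and $a$.

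The main obstacle is the property (SI) step that upgrades this tracial smallness into the genuine Cuntz subequivalence displayed above. This is where the assumptions enter crucially. Non-elementarity together with nuclearity supplies, inside hereditary subalgebras, pairwise orthogonal positive elements that witness the required matrix-like structure at arbitrarily small tracial scale. The hypothesis $T^+(A) = T_b(A)$ combined with compactness of $T(A)$ ensures, in the spirit of Proposition \ref{prop:LimitTracesSuffice}, that the relevant tracial information on $F_\omega(A)$ is captured by limit traces rather than general generalised limit traces, so that the unital Matui--Sato apparatus can be applied without modification. Finally, strict comparison---through the identification of $F(\mathrm{Cu}(A))$ with lower semicontinuous traces and almost unperforation of $\mathrm{Cu}(A)$---is what converts the tracial smallness of $1_{F_\omega(A)} - \phi_n(1_n)$ into its Cuntz subequivalence to $\phi_n(e_{11})$.

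The delicate combinatorics of producing orthogonal witnessing elements inside the trace kernel ideal $J_A$ and running the (SI) argument in this non-unital tracial setting is precisely the technical content of \cite[Theorem 2.1]{Jac13}, which builds on \cite{MS12, Na13, Ki06, RW10}; I would regard invoking that theorem as the efficient way to complete the proof, rather than reproducing the argument in full.
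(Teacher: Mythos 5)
Your proposal matches the paper's treatment: the paper gives no independent proof of this statement, presenting it as a restatement of \cite[Theorem 2.1]{Jac13} and noting (in a footnote) exactly the translation you describe, namely that central surjectivity plus the projectivity of $C_0(0,1]\otimes \M_n$ converts the uniform McDuff property into the uniformly tracially large order zero maps required as input to Jacelon's theorem. Your additional sketch of the R{\o}rdam--Winter reformulation and the roles of strict comparison and property (SI) is accurate, and deferring the (SI) argument itself to \cite{Jac13} is precisely what the paper does.
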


For the proof of Theorem \ref{thm:Main}, we shall also need the following lemma, which states that, under suitable conditions,  a simple stably projectionless $\mathrm{C}^*$-algebra is stably isomorphic to a $\mathrm{C}^*$-algebra with a well-behaved tracial state space.

\begin{lemma}\label{lem:nice.her.subalgebra}
	Let $A$ be a  simple, separable, stably projectionless, exact $\mathrm{C}^*$-algebra such that $\mathrm{Cu}(A) \cong \mathrm{Cu}(A \otimes \Z)$. 
	Then $A$ is stably isomorphic to a $\mathrm{C}^*$-algebra $B$ with $T^+(B) = T_b(B)$ and with $T(B)$ non-empty and compact.
\end{lemma}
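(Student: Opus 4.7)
The idea is to build $B$ as a hereditary $\mathrm{C}^*$-subalgebra of $A\otimes\K$ of the form $B = \overline{a(A\otimes\K)a}$, where $a\in(A\otimes\K)_+$ is carefully chosen. Since $A$ is simple, any nonzero hereditary subalgebra of $A\otimes\K$ is full, so by Brown's theorem such a $B$ is automatically stably isomorphic to $A$. Everything thus reduces to picking $a$ so that $B$ has the correct tracial structure.

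The translation between $a$ and the trace structure of $B$ is as follows: if $a$ is strictly positive in $B$, then densely-defined lower semi-continuous traces on $B$ correspond (via the trace extending to all of $A\otimes\K$) to nonzero elements $\sigma\in T^+(A\otimes\K)$, and the norm of the restriction satisfies $\|\sigma|_B\| = d_\sigma(a)$. Hence $T^+(B)=T_b(B)$ is equivalent to $d_\sigma(a)<\infty$ for every nonzero $\sigma\in T^+(A\otimes\K)$, and $T(B)$ compact non-empty amounts to the functional $\sigma\mapsto d_\sigma(a)$ providing a continuous, strictly positive, proper base functional for the cone of lower semi-continuous traces (in the topology of pointwise convergence on the Pedersen ideal).

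To produce such an $a$, I would use the hypothesis $\mathrm{Cu}(A)\cong\mathrm{Cu}(A\otimes\Z)$. Because $A$ is exact, Haagerup's theorem identifies $F(\mathrm{Cu}(A))$ with $T^+(A)$ via rank functions, and the Cuntz semigroup isomorphism transports rank functions on one side to rank functions on the other. In $\mathrm{Cu}(A\otimes\Z)$, the divisibility and abundance of soft elements coming from $\Z$ let one find an element with a bounded, continuous, everywhere-positive rank function: concretely, starting from a fixed nonzero $e\in A_+$ one passes to a suitable compression and pairs it with a soft element of $\Z$ (e.g.\ a generator of small rank in the soft part of $\mathrm{Cu}(\Z)$) to obtain $[a']\in\mathrm{Cu}(A\otimes\Z)$ with finite continuous rank function, bounded away from $0$ on the base of $T^+(A\otimes\Z)$ determined by $e$. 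Transporting $[a']$ back through the Cuntz isomorphism gives $[a]\in\mathrm{Cu}(A)\subseteq\mathrm{Cu}(A\otimes\K)$ with the same rank function, and any positive representative $a\in(A\otimes\K)_+$ then defines the required $B$.

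The main obstacle is the last step: showing that the constructed $[a]$ really has a rank function that is finite, continuous, and bounded away from zero on \emph{all} of $T^+(A\otimes\K)$, not merely on $T(A)$, and that its normalization gives a weak-$*$ compact base. This is exactly where the Cuntz semigroup hypothesis is doing essential work, as it is only the $\Z$-stable structure of $\mathrm{Cu}(A\otimes\Z)$ (almost unperforation plus divisibility plus availability of continuous-rank soft elements) that guarantees the existence of such a well-behaved Cuntz class. The stably projectionless hypothesis is used to ensure that no pathological compact Cuntz classes interfere with this soft construction, and also justifies that we cannot simply take $B=pAp$ for a projection $p$.
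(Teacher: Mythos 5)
Your overall architecture matches the paper's: choose $a\in(A\otimes\K)_+$ with a well-behaved rank function, set $B=\overline{a(A\otimes\K)a}$, and invoke Brown's theorem for the stable isomorphism. But there is a genuine gap at exactly the step you flag as the ``main obstacle'', and your proposed resolution does not close it. Tensoring a (compression of a) fixed element $e\in A_+$ with a soft element $z$ of $\Z$ only rescales the rank function: $d_{\tau\otimes\sigma}(e\otimes z)$ is a scalar multiple of $d_\tau(e)$, and for a fixed element the function $\tau\mapsto d_\tau(e)$ on $T^+(A)$ is in general only lower semicontinuous, not continuous, and there is no reason for its normalisation to yield a compact base for the cone. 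Continuity of $\tau\mapsto d_\tau(a)$ in the topology of pointwise convergence on the Pedersen ideal is precisely what is needed for $T(B)$ to be compact, so the softness of the $\Z$-factor alone does not deliver the conclusion.

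The paper closes this gap in two steps that your sketch does not supply. First, it produces the target function \emph{abstractly}: by the structure theory of the cone $T^+(A)$ for separable algebras (as in the proof of \cite[Theorem 2.7]{CE}, building on \cite{ERS11} and \cite{TT15}), there exists a strictly positive, \emph{continuous}, linear $f:T^+(A)\to\mathbb{R}$, which defines an element $\hat f$ of the dual cone $L(F(\mathrm{Cu}(A)))$. Second, it transports $\hat f$ through the Cuntz isomorphism to $L(F(\mathrm{Cu}(A\otimes\Z)))$ and applies the realisation theorem \cite[Theorem 6.6]{ERS11} to find $[x]\in\mathrm{Cu}(A\otimes\Z)$ with $\lambda([x])=\hat f(\Phi^*\lambda)$ for all functionals $\lambda$; pulling back gives $[a]=\Phi^{-1}([x])$ with $d_\tau(a)=f(\tau)$ for all $\tau\in T^+(A)$, and then \cite[Lemma 2.5]{CE} gives the tracial properties of $B$. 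So the $\Z$-stable Cuntz semigroup is used via a realisation theorem for arbitrary dual-cone functionals, not via an explicit soft element. One further small misattribution: the stably projectionless hypothesis is used to guarantee that $A$ has a nonzero (quasi)trace at all, via \cite[Theorem 1.2]{BC82} (together with Haagerup's theorem and simplicity to get a densely defined trace), not to rule out interference from compact Cuntz classes.
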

\begin{proof}	
	As $A$ is stably projectionless, there exists a quasitrace on $A$ by \cite[Theorem 1.2]{BC82}. As $A$ is exact, Haagerup's Theorem ensures all lower semicontinuous quasitraces on $A$ are traces. Since $A$ is simple all non-trivial traces on $A$ are densely defined. As in the proof of \cite[Theorem 2.7]{CE}, which builds on \cite{ERS11} and \cite{TT15}, there exists a strictly positive, continuous, linear function $f: T^+(A) \to \mathbb{R}$ which in turn defines an element $\hat{f}$ in the dual cone $L(F(\mathrm{Cu}(A)))$ where  $\hat{f}(d_\tau) := f(\tau)$ for all $\tau \in T^+(A)$.
	
	By hypothesis, there is a $\mathrm{Cu}$-isomorphism $\Phi: \mathrm{Cu}(A) \to \mathrm{Cu}(A \otimes \Z)$. 
	Say $\Phi^*: F(\mathrm{Cu}(A\otimes \Z)) \to F(\mathrm{Cu}(A))$ is the natural map induced by $\Phi$. 
	It follows that $\hat{f} \circ \Phi^* \in L(F(\mathrm{Cu}(A \otimes \Z)))$. 
	By \cite[Theorem 6.6]{ERS11}, there is a element $[x] \in \mathrm{Cu}(A \otimes \Z)$ that induces this function, i.e.\ $\hat f \circ \Phi^* (\lambda) = \lambda ([x])$ for all $\lambda \in F(\mathrm{Cu}(A\otimes \Z))$.  
	Hence, any positive $a \in A \otimes \K$ such that $[a] = \Phi^{-1}([x])$ satisfies 
	\begin{align}
	d_\tau([a]) & = d_\tau \circ \Phi^{-1} ( [x] ) \notag \\
	& = \hat f \circ \Phi^* (d_\tau \circ \Phi^{-1}) \notag \\
	& = \hat f(d_\tau \circ \Phi^{-1} \circ \Phi) = f(\tau),
	\end{align}
	for all $\tau \in T^+(A)$. 
	By \cite[Lemma 2.5]{CE}, the hereditary $\mathrm{C}^*$-subalgebra $B := \overline{a (A\otimes \K)a}$ satisfies that $T^+(B) = T_b(B)$ and $T(B)$ is compact. By \cite[Theorem 2.8]{Br77}, $B$ is stably isomorphic to $A$.	
\end{proof}

With the prerequisite results and technical machinery in place, we now prove Theorems \ref{thm:Main} and \ref{thm:Toms-Winter}. 
\begin{proof}[Proof of Theorem \ref{thm:Main}]
	When $T^+(A) = \{0\}$, the result is known. Indeed, the condition $\mathrm{Cu}(A) \cong \mathrm{Cu}(A \otimes \Z)$ ensures that $\mathrm{Cu}(A)$ is almost unperforated by \cite[Theorem 4.5]{Ro04}.\footnote{When applying results of \cite{Ro04}, note that $\mathrm{Cu}(A) = W(A \otimes \K)$. The notation $W(\cdot)$  refers to a forerunner of the modern Cuntz semigroup $\mathrm{Cu}(\cdot)$ where only matrix amplifications of the algebras are considered instead of the stabilisations.} Hence, $A$ will be purely infinite by \cite[Corollary 5.1]{Ro04}. By \cite[Theorem 3.15]{KP00}, $A \cong A \otimes \mathcal{O}_\infty$. Therefore, $A$ is $\Z$-stable. 
	
Hereinafter, we assume that $T^+(A) \neq \{0\}$. It follows from Brown's Theorem \cite{Br77}, that $A$ is either stably projectionless or stably isomorphic to a unital C$^*$-algebra (see for example \cite[Proposition 2.4]{CE}). Therefore, $A$ is stably isomorphic to a C$^*$-algebra $B$ with $T^+(B) = T_b(B)$ and $T(B)$ non-empty and compact. (If $A$ is stably projectionless use Lemma \ref{lem:nice.her.subalgebra}.)
Since $A$ is simple, separable and nuclear, $B$ is also simple, separable and nuclear. In particular, $B$ will be exact, so Haagerup's Theorem ensures that all lower semicontinuous quasitraces on $B$ are traces. 
	
As the Cuntz semigroup is invariant under stable isomorphism, we have $\mathrm{Cu}(B \otimes \Z) \cong \mathrm{Cu}(B)$. It follows by \cite[Theorem 4.5]{Ro04} that $\mathrm{Cu}(B)$ is almost unperforated and hence $B$ has strict comparison (see for example \cite[Proposition 2.1]{OPR}).
	
Since $A$ has stabilised property $\Gamma$, $B$ has stabilised property $\Gamma$ by Theorem \ref{thm:upGamma.stable}. As $T^+(B) = T_b(B)$ and $T(B)$ is non-empty and compact, $B$ has uniform property $\Gamma$ by Proposition \ref{prop:sGamma.implies.uGamma}. By \cite[Theorem 4.6]{CETW}, $B$ is uniformly McDuff. Hence, $B$ is $\Z$-stable by Theorem \ref{thm:SI}. By \cite[Corollary 3.2]{TW07}, $\Z$-stability is preserved under stable isomorphism. Therefore, $A$ is $\Z$-stable. 
\end{proof}

\begin{proof}[Proof of Theorem \ref{thm:Toms-Winter}]
	The equivalence (i) $\Leftrightarrow$ (ii) is a consequence of \cite{Wi12,Ti14, CETWW, CE}. The direction (iii) $\Rightarrow$ (ii) is the content of Theorem \ref{thm:Main}. 
	
	For (ii) $\Rightarrow$ (iii), let $A$ be a $\Z$-stable $\mathrm{C}^*$-algebra that satisfies the hypotheses of this theorem. Clearly, the condition on the Cuntz semigroup is automatically satisfied. Let us show that $A$ has stabilised property $\Gamma$. This is vacuous in the traceless case because we may take $e_1, \ldots, e_n$ all to be zero. Hence, we may assume that $T^+(A) \neq \{0\}$. By \cite[Theorem 2.7]{CE}, $A$ is stably isomorphic to a hereditary $\mathrm{C}^*$-subalgebra $B \subseteq A\otimes \K$ with $T^+(B) = T_b(B)$ and $T(B)$ compact and non-empty. It follows by \cite[Proposition 2.3]{CETWW} that $B$ has uniform property $\Gamma$ (and hence stabilised property $\Gamma$ by Proposition \ref{prop:sGamma.implies.uGamma}(ii)). Therefore, $A$ has stabilised property $\Gamma$ since this is a stable property.
\end{proof}

We conclude this paper with a final observation. A key component of the proof of Lemma \ref{lem:nice.her.subalgebra} was realising functions $f:T^+(A) \to \mathbb{R}$ with elements in the Cuntz semigroup $\mathrm{Cu}(A)$, i.e.\ finding $[x]\in \mathrm{Cu}(A)$ such that $f(\tau) = d_\tau([x])$. In the proof above, we used the hypothesis that $\mathrm{Cu}(A) \cong \mathrm{Cu}(A \otimes \Z)$ and \cite[Theorem 6.6]{ERS11} to achieve this. Alternatively, we could use stable rank one and \cite[Theorem 7.14]{APRT}. With this modification, we obtain the following variant of Theorem \ref{thm:Main} by an almost identical proof.

\begin{theorem}
	Let $A$ be simple, separable, nuclear $\mathrm{C}^*$-algebra with stable rank one, strict comparison and stabilised property $\Gamma$. Then $A$ is $\Z$-stable.
\end{theorem}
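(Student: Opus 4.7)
The plan is to follow the proof of Theorem \ref{thm:Main} almost verbatim, replacing the step that invokes the Cuntz semigroup isomorphism hypothesis with an appeal to \cite[Theorem 7.14]{APRT}, whose hypothesis is precisely stable rank one.

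First, I would note that the traceless case does not arise here: since $A$ is simple, exact (because it is nuclear), and of stable rank one, it is stably finite, and so $T^+(A) \neq \{0\}$ by \cite[Theorem 1.2]{BC82} combined with Haagerup's theorem. Next, I would produce a hereditary $\mathrm{C}^*$-subalgebra $B \subseteq A \otimes \K$ that is stably isomorphic to $A$ and satisfies $T^+(B) = T_b(B)$ with $T(B)$ compact and non-empty. If $A$ is not stably projectionless, Brown's theorem supplies a unital such $B$ directly. In the stably projectionless case, I would mimic the proof of Lemma \ref{lem:nice.her.subalgebra}: fix a strictly positive continuous linear function $f \colon T^+(A) \to \mathbb{R}$ as in \cite[Theorem 2.7]{CE}, and then realize the induced element $\hat f \in L(F(\mathrm{Cu}(A)))$ as a rank function $d_\tau \mapsto d_\tau([a])$ by applying \cite[Theorem 7.14]{APRT} to $A$ directly (using stable rank one here, rather than transferring to $A \otimes \Z$ and back via a $\mathrm{Cu}$-isomorphism and \cite[Theorem 6.6]{ERS11}). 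Setting $B := \overline{a(A \otimes \K)a}$ and invoking \cite[Lemma 2.5]{CE} completes the reduction.

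From here the argument concludes as in the proof of Theorem \ref{thm:Main}: strict comparison descends from $A$ to the Morita equivalent algebra $B$; stabilised property $\Gamma$ passes to $B$ by Theorem \ref{thm:upGamma.stable}; Proposition \ref{prop:sGamma.implies.uGamma}(ii) upgrades this to uniform property $\Gamma$ for $B$; \cite[Theorem 4.6]{CETW} yields that $B$ is uniformly McDuff; Theorem \ref{thm:SI} gives $\Z$-stability of $B$; and \cite[Corollary 3.2]{TW07} transfers $\Z$-stability back to $A$. The only non-routine point is the adapted stably projectionless reduction, and even there the proof follows Lemma \ref{lem:nice.her.subalgebra} line by line once one observes that \cite[Theorem 7.14]{APRT} produces the required element of $\mathrm{Cu}(A)$ realizing $\hat f$ as a rank function on $A$ itself, bypassing the detour through $A \otimes \Z$.
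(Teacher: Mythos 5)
Your proposal is correct and matches the paper's intended argument exactly: the authors prove this theorem by running the proof of Theorem~\ref{thm:Main} verbatim, replacing the use of $\mathrm{Cu}(A)\cong\mathrm{Cu}(A\otimes\Z)$ and \cite[Theorem 6.6]{ERS11} in Lemma~\ref{lem:nice.her.subalgebra} with stable rank one and \cite[Theorem 7.14]{APRT} to realise $\hat f$ by an element of $\mathrm{Cu}(A)$. Your observation that the traceless case cannot occur under stable rank one is a correct (and slightly cleaner) handling of that branch, modulo the pedantic point that the existence of a quasitrace on a simple stably finite exact algebra is more naturally attributed to Blackadar--Handelman than to \cite[Theorem 1.2]{BC82}.
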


\end{document}